\def\NZQ{\mathbb}               % the font for N,Z,Q,R,C
\def\NN{{\NZQ N}}
\def\QQ{{\NZQ Q}}
\def\ZZ{{\NZQ Z}}
\def\RR{{\NZQ R}}
\def\PP{{\NZQ P}}
\newtheorem{Theorem}{Theorem}[section]
\newtheorem{Lemma}[Theorem]{Lemma}
\newtheorem{Corollary}[Theorem]{Corollary}
\newtheorem{Proposition}[Theorem]{Proposition}
\newtheorem{Remark}[Theorem]{Remark}
\newtheorem{Example}[Theorem]{Example}
\newtheorem{Definition}[Theorem]{Definition}
\let\epsilon\varepsilon
\let\phi=\varphi
\let\kappa=\varkappa
\begin{document}

\title{The Proj of the Rees algebra of a graded family of ideals}

\dedicatory{Dedicated to the memory of J\"urgen Herzog}

\author{Steven Dale Cutkosky}

\thanks{The  author was partially supported by NSF grant DMS-2348849.}

\address{Steven Dale Cutkosky, Department of Mathematics,
University of Missouri, Columbia, MO 65211, USA}
\email{cutkoskys@missouri.edu}

\begin{abstract} In this article we investigate the condition that the Proj of a  Rees algebra of a graded family of ideals in a Noetherian local ring $R$ is Noetherian. In many cases, the Proj will be Noetherian even when the  Rees algebra is not. For instance, the Proj of the Rees algebra of a graded filtration of ideals will alway be Noetherian if the analytic spread of the filtration is zero.
%However, the Proj is proper over $R$ if and only if the Rees algebra is Noetherian. 

The Proj of a Rees algebra of a divisorial filtration on a two dimensional normal excellent local ring is always Noetherian, as was proven by Russo and later with a different proof by the author.  
We give examples in this paper of  divisorial filtrations on three dimensional normal excellent local rings whose Proj is not Noetherian, 
showing that this theorem does not extend to higher dimensions.  

A consequence of the fact that the Proj of a  divisorial filtration over a two dimensional excellent normal local ring is always Noetherian is that the preimage of the maximal ideal of $R$ in the Proj has only finitely many irreducible components. As a consequence, 
the fiber cone of such a filtration  has only finitely many minimal primes. We give an example of a graded filtration of ideals in a two dimensional regular local ring such that the preimage of the maximal ideal in the Proj of the Rees algebra of the filtration has infinitely many irreducible components, so that the Proj is not Noetherian, and the fiber cone of the filtration has infinitely many minimal primes. 
\end{abstract}

%\keywords{Rees Algebra, Analytic Spread, Divisorial Valuation, Divisorial Filtration}
%\subjclass[2020]{13H15, 13A18, 14C17}

\maketitle

\section{Introduction} If $A$ is a ring, then a graded family of ideals in $A$ is a family $\mathcal I=\{I_n\}_{n\in \NN}$ of ideals such that $I_0=A$ and $I_nI_n\subset I_{m+n}$ for all $m,n$.

Let $(R,m_R)$ be a $d$-dimensional normal, Noetherian local domain, and $\mathcal I=\{I_n\}$ be a graded family of $m_R$-primary ideals on $R$. The Rees algebra $R[\mathcal I]=\oplus_{n\ge 0}I_n$ of $\mathcal I$ is often not Noetherian. The simplest example of  a non Noetherian  graded family is given by  $I_n=m_R$ for all $n$. 

Let $\pi:\mbox{Proj}(R[\mathcal I])\rightarrow \mbox{Spec}(R)$ be the natural projection. We have that 
$$
\mbox{Proj}(R[\mathcal I])\setminus \pi^{-1}(m_R)\rightarrow \mbox{Spec}(R)\setminus \{m_R\}
$$
 is an isomorphism, so everything interesting is happening in the fiber over the maximal ideal. 

The analytic spread $\ell(\mathcal I)$ of $\mathcal I$ is defined as
$$
\ell(\mathcal I)=\dim R[\mathcal I]/m_RR[\mathcal I].
$$
 We have that 
 $$
 0\le \ell(\mathcal I)\le d=\dim R,
 $$
 as shown in Lemma 3.6 \cite{CS}, and 
$\ell(\mathcal I)$ can be any number from 0 to $d$. In the  case that $\ell(\mathcal I)=0$, $\pi^{-1}(m_R)=\emptyset$, so $\mbox{Proj}(R[\mathcal I])\cong \mbox{Spec}(R)\setminus \{m_R\}$, which is a Noetherian scheme. The filtration $\mathcal I$ defined by $I_n=m_R$ for all $n$ gives a simple example where $\ell(\mathcal I)=0$.

Divisorial filtrations are defined in Section \ref{SecVal}.
For divisorial filtrations on an excellent normal local domain of dimension two, we have the following theorem.

\begin{Theorem}\label{Theorem1}(Theorem 4 \cite{R}, Theorem 1.3 \cite{C1}) Suppose that $R$ is a 2-dimensional normal excellent local ring and $\mathcal I$ is a divisorial filtration on $R$.  Then $\mbox{Proj}(R[\mathcal I])$ is a Noetherian normal scheme. It is proper over $\mbox{Spec}(R)$ if and only if $R[\mathcal I]$ is Noetherian.
\end{Theorem}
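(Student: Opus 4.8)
The plan is to transfer the whole problem to the geometry of a resolution of singularities. Since $R$ is two-dimensional, excellent and normal, there is a projective birational morphism $f\colon X\to \mbox{Spec}(R)$ with $X$ regular, and after finitely many further blow-ups we may assume every divisorial valuation entering the definition of $\mathcal I$ is $\mbox{ord}_E$ for a prime divisor $E$ on $X$. Denoting by $E_1,\dots,E_r$ the curves on $X$ lying over $m_R$, one gets $I_n=\Gamma(X,\mathcal O_X(-\lceil nD\rceil))$ for a fixed effective $\RR$-divisor $D$ whose support lies in $E_1\cup\cdots\cup E_r$ together with finitely many strict transforms of height-one primes of $R$. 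I would first peel off this latter ``free'' part, which only involves the already Noetherian scheme $\mbox{Spec}(R)\setminus\{m_R\}$, and concentrate on the case that $D$ is supported on $E_1\cup\cdots\cup E_r$. The single decisive input -- and, as the rest of the paper shows, the reason the theorem holds here but fails in dimension three -- is that the intersection matrix $(E_i\cdot E_j)$ is negative definite.

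For each $n$ let $C_n=\sum_i c_{n,i}E_i$ be the smallest effective divisor with $\Gamma(X,\mathcal O_X(-C_n))=I_n$; that is, $c_{n,i}=\min\{\mbox{ord}_{E_i}(h):h\in I_n\}$, and equivalently $\mathcal O_X(-C_n)$ is the divisorial part of $I_n\mathcal O_X$. Then $C_n\ge\lceil nD\rceil$ and $C_{m+n}\le C_m+C_n$, so $\Theta:=\lim_n\frac1n C_n$ exists as an $\RR$-divisor and $C_n\ge n\Theta$ for all $n$. A standard argument from Zariski-decomposition theory on surfaces, using negative definiteness, shows $\Theta\cdot E_i\le 0$ for every $i$ -- it is the ``positive part'' attached to $\mathcal I$. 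By contractibility of negative-definite configurations of curves on an excellent surface, there is a projective birational morphism $g\colon X\to X'$ onto a normal scheme, proper over $\mbox{Spec}(R)$ and an isomorphism over $\mbox{Spec}(R)\setminus\{m_R\}$, contracting exactly the curves $E_i$ with $\Theta\cdot E_i=0$; on $X'$ the divisor $-\Theta$ is relatively ample away from the (finitely many) curves along which it fails to be $\QQ$-Cartier.

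The core of the argument is then to identify $\mbox{Proj}(R[\mathcal I])$ with the open subscheme $U\subseteq X'$ obtained by deleting those finitely many curves $g(E_j)$ along which the sheaves $\mathcal O_X(-C_n)$ fail, for every $n$, to be generated by their global sections (when $\Theta$ is a $\QQ$-divisor, this deleted locus is exactly the non-$\QQ$-Cartier locus of $\Theta$ on $X'$). One half is soft: the natural rational map from $X$ to $\mbox{Proj}(R[\mathcal I])$ determined by the globally generated ideal sheaves $I_n\mathcal O_X$ is a morphism on the open set where some $\mathcal O_X(-C_n)$ is globally generated, it factors through $g$, and it realizes that set as an open subscheme of $\mbox{Proj}(R[\mathcal I])$. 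The real work is to show the image is all of $U$ and that nothing else appears: this rests on the fact that on the surface $X'$, away from the deleted curves, $-\Theta$ is relatively ample and rounding it up to the integral divisors $C_n$ preserves global generation for infinitely many $n$, so that the affine charts $\mbox{Spec}((R[\mathcal I]_{ht^n})_0)$ of the Proj sweep out exactly $U$. Granting this, $\mbox{Proj}(R[\mathcal I])$ is an open subscheme of the Noetherian normal scheme $X'$, hence is itself a Noetherian normal scheme.

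For the properness criterion: if $R[\mathcal I]$ is Noetherian then, being graded with Noetherian degree-zero part $R$, it is a finitely generated $R$-algebra, so $\mbox{Proj}(R[\mathcal I])$ is projective -- in particular proper -- over $\mbox{Spec}(R)$. Conversely, if $\mbox{Proj}(R[\mathcal I])=U$ is proper over $\mbox{Spec}(R)$, then, being simultaneously an open and (by properness) a closed subscheme of the connected scheme $X'$, it equals $X'$; hence no curve was deleted, $\Theta$ is $\QQ$-Cartier and relatively ample on $X'$, and a Veronese subalgebra of $R[\mathcal I]$ is the section ring of an ample line bundle on the proper scheme $X'$, hence finitely generated, over which $R[\mathcal I]$ is a finite module -- so $R[\mathcal I]$ is Noetherian. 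The main obstacle is the identification in the third paragraph: one must show that the Rees algebra, even when it is itself not Noetherian, still carries enough global sections to recover $X'$ as a scheme, and one must pin down precisely which curves of $X'$ are omitted. This is exactly where the dimension-two hypothesis is used: negative-definite curve configurations contract, and a relatively ample $\RR$-divisor survives the rounding implicit in passing from $\lceil nD\rceil$ to $C_n$.
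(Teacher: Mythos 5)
You are comparing against a statement the paper itself does not prove: Theorem \ref{Theorem1} is quoted from Russo \cite{R} and from \cite{C1}, and your plan --- pass to a resolution $X\rightarrow \mbox{Spec}(R)$, study the limit $\Theta$ of the fixed divisorial parts $C_n$ of $I_n\mathcal O_X$ (your claim $(\Theta\cdot E_i)\le 0$ is indeed correct, since an element of $I_n$ of minimal order along $E_i$ gives a nonzero section of $\mathcal O_X(-C_n)\otimes\mathcal O_{E_i}$, so even $(C_n\cdot E_i)\le 0$), contract the curves with $(\Theta\cdot E_i)=0$, and exhibit $\mbox{Proj}(R[\mathcal I])$ as an open subscheme of the contraction --- is the same circle of ideas as in those references. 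The problem is that you assert precisely the steps where the content lies, and the one place where you commit to a precise statement, the description of the deleted locus, is wrong. Test it on the example behind Proposition \ref{SurfSing}: let $R$ be the completed cone over an elliptic curve $E$ with respect to $\mathcal O_E(p)$, $X\rightarrow\mbox{Spec}(R)$ the resolution with exceptional curve $E$ of self-intersection $-1$, $P=\Gamma(X,\mathcal O_X(-C))$ a height one prime whose strict transform $C$ meets $E$ in a single point $q$ with $q-p$ non-torsion in $\mbox{Pic}^0(E)$, and $I_n=P^{(n)}$, a divisorial filtration. (Note this example also undercuts your ``peeling off'' step: here $D=C$ has no exceptional component at all, yet all the subtlety is over $m_R$.) One computes $C_n=nC+(n+1)E$, so $\Theta=C+E$, $(\Theta\cdot E)=0$, your $g$ contracts $E$ and $X'=\mbox{Spec}(R)$; the non-globally-generated locus of $\mathcal O_X(-C_n)$ is a single moving point $q_n\in E$, so no curve lies in it for every $n$ and your recipe deletes nothing, giving $U=\mbox{Spec}(R)$. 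But $\mbox{Proj}(R[\mathcal I])=\mbox{Spec}(R)\setminus\{m_R\}$: the fiber over $m_R$ is empty, e.g.\ because $v_E(I_n)/n=(n+1)/n$ never attains its infimum, so $v_E$ has no center on the Proj by Proposition \ref{Prop6}. Thus the locus to be removed consists of finitely many closed points of $X'$ --- images of those contracted curves along which base points persist --- not ``curves $g(E_j)$'', and identifying exactly which contracted curves are bad, and proving that every remaining point is honestly covered by a chart $\mbox{Spec}(R[\mathcal I]_{(Ft^m)})$, is the theorem; it is the part you explicitly defer. (Your alternative non-$\QQ$-Cartier criterion is a different statement again and presupposes rationality of $\Theta$, which you never establish.)

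Two further steps are invoked as standard but are not. The existence of the contraction $g:X\rightarrow X'$ as a scheme proper over $\mbox{Spec}(R)$ does not follow from ``contractibility of negative-definite configurations'': Grauert's criterion is analytic and Artin-type contractions produce algebraic spaces, and the obstruction to scheme-theoretic contraction lives exactly in the non-rational-singularity phenomena (Proposition \ref{SurfSing}) that make $R[\mathcal I]$ non-Noetherian in the first place; the proofs in \cite{R} and \cite{C1} construct the relevant charts from section rings and base-locus analysis (Zariski decomposition with $I_n=\Gamma(X,\mathcal O_X(-\lceil n\Delta\rceil))$) rather than presupposing $X'$. Similarly, in the direction ``proper $\Rightarrow$ Noetherian'' you need $\Theta$ to be a $\QQ$-divisor, $\Theta$ to agree with the Zariski decomposition of $D$, and $I_{nd}=\Gamma(X',\mathcal O_{X'}(-nd\Theta'))$ for divisible degrees, before the ample-section-ring argument and the finiteness of $R[\mathcal I]$ over a Veronese subring can be run; none of this is argued. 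The soft half of the chart comparison and the implication that $R[\mathcal I]$ Noetherian gives properness are fine, but as written the core identification is both unproved and, in the form you state it, false.
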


If $R$ is normal and complete of dimension 2 with algebraically closed residue field of characteristic zero, then $R[\mathcal I]$ is Noetherian for all divisorial filtrations $\mathcal I$ of $R$
 if and only if $R$ has a rational singularity. This is shown in Proposition \ref{IntProp'}.

We give an example in Theorem \ref{Theorem2} showing that $\mbox{Proj}(R[\mathcal I])$ can be non Noetherian when $\mathcal I$ is a divisorial filtration on a normal excellent local ring $R$ of dimension 3, so Theorem \ref{Theorem1} does not extend to higher dimensions. 

A consequence of Theorem \ref{Theorem1} is that the fiber cone $R[\mathcal I]/m_RR[\mathcal I]$ has only finitely many minimal primes when $R$ is normal and excellent of dimension 2 and $\mathcal I$ is a divisorial filtration. In Theorem \ref{Theorem3} we give an example of a graded family of $m_R$-primary ideals on a 2-dimensional regular local ring such that $R[\mathcal I]/m_RR[\mathcal I]$ is a 2-dimensional ring which has infinitely many minimal primes. Necessarily, $\mbox{Proj}(R[\mathcal I])$ is not Noetherian. 

A remaining question is if there exists a divisorial filtration $\mathcal I$ on a normal, excellent local ring $R$ such that $R[\mathcal I]/m_RR[\mathcal I]$ has infinitely many minimal primes. 
%The proj of the fiber cone of  example of Theorem \ref{Theorem2} is Noetherian, and it has on

\section{Divisorial valuations and divisorial filtrations}\label{SecVal}

Suppose that $v$ is a valuation of a field $K$. Write  $\mathcal O_v$ for the valuation ring of $v$ with maximal ideal $m_v$, $vK$ for the value group of $v$ and $Kv$ for the residue field of $\mathcal O_v$.
A valuation $v$ of the quotient field $K$ of a local domain $R$ dominates $R$ if $R\subset \mathcal O_v$ and $m_v\cap R=m_R$.

If $v$ is a valuation of the quotient field of $R$ which is nonnegative on $R$ and $\pi:X\rightarrow \mbox{Spec}(R)$ is a proper birational morphism then the center of $v$ on $X$ is the unique (not necessarily closed) point $z\in X$ such that $\mathcal O_v$ dominates $\mathcal O_{X,z}$.

\begin{Definition}\label{DivvalDef}(Definition 9.3.1 \cite{HS}) Let $R$ be a Noetherian integral domain with quotient field $K$. Let $v$ be a valuation of $K$ such that $R\subset \mathcal O_v$. Let $P=m_v\cap R$. If $\mbox{trdeg}_{(R/P)_P}Kv=\mbox{ht}(P)-1$, then $v$ is said to be a divisorial valuation of $R$.
A divisorial valuation $v$ is an $m_R$-valuation if $v$ dominates $R$.
\end{Definition}

 If $R$ is an excellent local domain, then the divisorial valuations are the valuation rings of the quotient field of $R$ which are essentially of finite type over $R$.
 
  The value group of a divisorial valuation is $\ZZ$ (Theorem 9.3.2 and Proposition 6.3.4 \cite{HS}). 
A divisorial filtration of a domain $R$ is a filtration $\mathcal I=\{I_n\}$ where
$$
I_n=I(v_1)_{na_1}\cap I(v_2)_{na_2}\cap\cdots\cap I(v_r)_{na_r}
$$
for $n\ge 0$, where $v_1,\ldots,v_r$ are discrete valuations which are nonnegative on $R$, $a_1,\ldots,a_r$ are nonnegative rational numbers and
$$
I(v_i)_{\lambda}=\{f\in R\mid v_i(f)\ge \lambda\}
$$
for $\lambda$ a nonnegative real number.

\section{Surface singularities} 

We will use the following fact in this section.
A Rees algebra $R[\mathcal I]=\oplus_{n\ge 0}I_n$ is Noetherian if and only if $R[\mathcal I]$ is a finitely generated $R$-algebra, by Corollary 4 on page 184 \cite{B}.
 
 In this section we suppose that $R$ is a complete normal local domain of dimension 2, unless explicitly stated otherwise.

 \begin{Definition}(\cite{MS}, \cite{C2}) Let $\Omega(R)$ be the set of $m_R$-valuations. $R$ satisfies condition N if for any $v\in \Omega(R)$, there exists an $m_R$-primary ideal $A$ in $R$ such that if $X_A\rightarrow \mbox{Spec}(R)$ is the normalization of the blowup of $A$, the center of $v$ on $X_A$ is the reduced closed fiber of the map. 
 \end{Definition}

  \begin{Proposition}\label{PropFGN}(Proposition on page 60 of \cite{C2}) Suppose that v is an $m_R$-valuation.  Let 
 $$
 I(v)_n=\{f\in R\mid v(f)\ge n\}.
 $$
Then $\oplus_{n\ge 0}I(v)_n$ is a finitely generated $R$-algebra if and only if 
 there exists
an $m_R$-primary ideal $J\subset R$ such that the blow up $X$ of $J$ is normal, and the center
of $v$ on $X$ is the reduced exceptional fiber. 
\end{Proposition}

 Let $\pi:Y\rightarrow \mbox{Spec}(R)$ be a resolution of singularities. Let $C_1,\ldots,C_r$ be the integral exceptional divisors of $\pi$. Let $E_{Y^+}$ be the set of divisors $D=\sum a_iC_i$ such that $a_i\in \ZZ$ and $D$ is nef; that is, $(D\cdot C_i)\le 0$ for all $1\le i\le r$. Let $E_{Y^{++}}\subset E_{Y^+}$ be the divisors $D$ such that $\mathcal O_Y(-D)$ is generated by global sections.
 
 \begin{Theorem}\label{TheoremNG}(Theorem 3.10 \cite{Go}, \cite{C2}) $R$ satisfies condition N if and only if for each desingularization $\pi:Y\rightarrow \mbox{Spec}(R)$, there exists a positive integer $n$ such that $nE_{Y^+}\subset E_{Y^{++}}$.
 \end{Theorem}
 
 \begin{Theorem}\label{TheoremN4}(Theorem 4 \cite{C2}) $R$ satisfies condition N if and only if the divisor class group $\mbox{Cl}(R)$ of $R$ is a torsion group.
 \end{Theorem}

 \begin{Lemma}\label{LemmaGN} Suppose that the divisor class group $\mbox{Cl}(R)$ of $R$ is a torsion group. Let $\pi:X\rightarrow\mbox{Spec}(R)$ be a resolution of singularities, with integral exceptional divisors $C_1,\ldots, C_r$. Suppose that $F\subset X$ is an integral curve such that $F$ does not have exceptional support.  Then there exists $n_0\in \ZZ$ and $a_1,\ldots, a_r\in \ZZ_{\ge 0}$ such that
 $$
 \mathcal O_X(-n_0F)\cong\mathcal O_X(a_1C_1+\cdots+a_rC_r).
 $$
 \end{Lemma}
 
 \begin{proof} Let $\pi(F)=C$, an integral curve on $\mbox{Spec}(R)$. For $n\in \ZZ_{>0}$, 
 $$
 \pi_*\mathcal O_X(-nF)=\mathcal O_{\mbox{Spec}(R)}(-nC),
 $$
  so there exists $n_0>0$ such that
 $$
 \pi_*\mathcal O_X(-n_0F)=\mathcal O_{\mbox{Spec}(R)}(-n_0C) \cong \mathcal O_{\mbox{Spec}(R)}\cong \tilde{R},
 $$
 the sheaf associated to $R$,  since $\mbox{Cl}(R)$ is a torsion group. Thus there exists $g\in R$ such that 
  $$
  \pi_*\mathcal O_X(-n_0F)=gR.
  $$
   Since $X\setminus \pi^{-1}(m_R)\rightarrow \mbox{Spec}(R)\setminus \{m_R\}$ is an isomorphism, the divisor of $g$ on $X$ is 
  $$
  (g)_X=n_0F+\sum_{i=1}^ra_iC_i
  $$
  for some $a_i\in \ZZ_{\ge 0}$. Thus  
  $\mathcal O_X(-n_0F)\cong \mathcal O_X(\sum_{i=1}^ra_iC_i)$.
 \end{proof}

 \begin{Lemma}\label{Lemmalast} Suppose that $R$ is an excellent normal local domain  and $\pi:X\rightarrow\mbox{Spec}(R)$ is the blowup of an $m_R$-primary ideal of $R$. Let $C_1,\ldots,C_r$ be the integral exceptional divisors of $\pi$. Suppose that $u_1,\ldots,u_r\in \ZZ$. Then 
 $$
 \Gamma(X,\mathcal O_X(u_1C_1+\ldots+u_rC_r))=\Gamma(X,\mathcal O_X(d_1C_1+\cdots+d_rC_r))
 $$
 where $d_i=\min\{u_i,0\}$ for $1\le i\le r$.
 \end{Lemma}
 
 \begin{proof}
 
 $$
 \Gamma(X,\mathcal O_X(\sum_{i=1}^ru_iC_i))\subset \Gamma(X,\mathcal O_X(\sum_{i=1}^ru_iC_i))_P\cong R_P
 $$
 for all height one prime ideals $P$ of $R$
 since $X\setminus \pi^{-1}(m_R)= \mbox{Spec}(R)\setminus\{m_R\}$. Now $R=\cap R_P$ where the intersection is over the height one prime ideals $P$ of $R$ since $R$ is normal. Thus $\Gamma(X,\mathcal O_X(\sum_{i=1}^ru_iC_i))\subset R$. Let $v_i$ be the valuation of the valuation ring $\mathcal O_{X,C_i}$. Then $v_i$ dominates $R$ so that $v_i(f)\ge 0$ for all $f\in R$.
 
 \end{proof}

 \begin{Lemma}\label{LemmaAN} Suppose that $R$ is an excellent local domain and $\pi:X\rightarrow\mbox{Spec}(R)$ is the blowup of an ideal of $R$. Suppose that $\mathcal L$ is an invertible sheaf on $X$ which is generated by global sections. Then
 $\oplus_{n\ge 0}\Gamma(X,\mathcal L^n)$ is a finitely generated $R$-algebra.
 \end{Lemma}
 
 \begin{proof} Let $\mathcal O_X^r\rightarrow \mathcal L$ be a surjection, which exists since $\mathcal L$ is generated by global sections. By Theorem II.7.1 \cite{H}, there exists a unique $R$-morphism $\phi:X\rightarrow Z:=\PP^{r-1}_R$ such that $\phi^*(\mathcal O_Z(1))\cong \mathcal L$. The map $\phi$ is a projective $R$-morphism since $\pi$ is (Proposition II.5.5.5 \cite{EGAII}), so $\phi_*\mathcal O_X$ is a coherent $\mathcal O_Z$-module by Theorem II.8.8 \cite{H}. Thus $Y:=\phi(X)$ is a closed integral subscheme of $Z$, with structure sheaf 
 $ \phi_*\mathcal O_X$, and $\phi(X)=Y=\mbox{Proj}(S)$ where $S$ is a finitely generated graded $R$-algebra which is generated in degree 1 and is a domain. We have that
 $$
 \mathcal O_Y(n)\cong (\phi_*\mathcal O_X)(n)\cong \phi_*(\mathcal L^n)
 $$
 by the projection formula. By the proof of Theorem II.5.19 \cite{H} (and Remark II.5.19.2 \cite{H}),
 $$
 \oplus_{n\ge 0}\Gamma(X,\mathcal L^n)\cong \oplus_{n\ge 0}\Gamma(Y,\mathcal O_Y(n))
 $$
 is a finite $S$-module, so $\oplus_{n\ge 0}\Gamma(X,\mathcal L^n)$ is a finitely generated $R$-algebra.
 \end{proof}

 \begin{Proposition}\label{PropBN} Suppose that $\mbox{Cl}(R)$ is a torsion group and $\mathcal I=\{I_n\}$ is a divisorial filtration of $R$. Then $\oplus_{n\ge 0}I_n$ is a finitely generated $R$-algebra. 
 \end{Proposition}
 
 \begin{proof} There exists a resolution of singularities $\pi:X\rightarrow\mbox{Spec}(R)$ with integral exceptional divisors $C_1,\ldots,C_r$ and integral divisors $F_1,\ldots,F_s$ which do not contract to $m_R$, such that 
 $$
 I_n=\Gamma(X,\mathcal O_X(-\lceil nD\rceil))
 $$
 where $D=\sum a_iC_i+\sum b_jF_j$ is an effective $\QQ$-divisor on $X$, and 
 $$
 \lceil nD\rceil =\sum \lceil na_i\rceil C_i+\sum \lceil nb_j\rceil F_j
 $$
 where $\lceil x\rceil$ is the roundup of the real number $x$. By Lemma \ref{LemmaGN}, there exists $n_0>0$ such that $n_0D$ is an integral divisor and 
 $\mathcal O_X(-n_0D)\cong \mathcal O_X(\sum_{i=1}^r u_iC_i)$ for some $u_i\in \ZZ$.  By Lemma \ref{Lemmalast}, for all $n\in \ZZ_{>0}$,
 $$
 \mathcal O_X(-nn_0D)\cong\mathcal O_X(\sum_{i=1}^rnd_iC_i)
 $$
 where $d_i=\min\{u_i,0\}$ for $1\le i\le r$. Let $D'=\sum_{i=1}^r-d_iC_i$, an effective divisor.

 There then exists an effective  $\QQ$-divisor $\Delta$ on $X$ such that $-\Delta$ is nef ($(\Delta\cdot F)\le 0$ for all exceptional curves $F$ of $\phi$) and
 $$
 \Gamma(X,\mathcal O_X(-\lceil nD'\rceil))=\Gamma(X,\mathcal O_X(-\lceil n\Delta\rceil))
 $$
 for all $n\ge 0$
 by relative Zariski decomposition (Lemma 4.1 and Lemma 4.3 \cite{C1}). 
 
 Let $m_0$ be such that $m_0\Delta$ is an (integral) Cartier divisor. Then there exists a positive integer $s_0$ such that $\mathcal O_X(-s_0m_0\Delta)$ is generated by global sections by Theorems \ref{TheoremNG} and  \ref{TheoremN4}, since $\mbox{Cl}(R)$ is a torsion group.  
 Thus $\oplus_{n\ge 0}\Gamma(X,\mathcal O_X(-ns_0m_0\Delta))$ is a finitely generated $R$-algebra by Lemma \ref{LemmaAN}.  Thus
 $\oplus_{n\ge 0}I_{nm_0n_0}$ is a finitely generated $R$-algebra since
 $$
 \oplus_{n\ge 0}I_{nm_0n_0s_0}\cong\oplus_{n\ge 0}\Gamma(X,\mathcal O_X(-nm_0s_0D'))\cong \oplus_{n\ge 0}\Gamma(X,\mathcal O_X(-nm_0s_0\Delta)).
 $$
 
 Suppose that $f\in I_n$ for some $n$. Then $f^{m_0n_0s_0}\in I_{nm_0n_0s_0}$, so 
 $\oplus_{n\ge 0}I_n$ is integral over $\oplus_{n\ge 0} I_{nm_0n_0s_0}$. Thus $\oplus_{n\ge 0}I_n$ is a finitely generated $R$-algebra by Scholie IV.7.8.3 \cite{EGAIV} since 
 $\oplus_{n\ge 0}I_{nm_0n_0s_0}$
  is a finitely generated $R$-algebra, and hence is an excellent domain.
 \end{proof}

\begin{Proposition}\label{SurfSing} Suppose that $R$ is a complete normal local domain of dimension two. Then the following are equivalent
\begin{enumerate}
\item[1)] 
$\oplus_{n\ge 0}I_v(n)$ is Noetherian for all $m_R$-valuations $v$ of $R$.
\item[2)] $\oplus_{n\ge 0}I_v(n)/I_v(n+1)$
 is Noetherian for all $m_R$-valuations $v$ of  $R$.
 \item[3)] $\mbox{Cl}(R)$ is a torsion group.
  \end{enumerate}
\end{Proposition}

\begin{proof} 

1) equivalent to 3) is by Proposition \ref{PropFGN} and Theorem \ref{TheoremN4}.

%If $\oplus_{n\ge 0}I_v(n)$ is Noetherian for all divisorial valuations $v$ of $R$ which dominate $R$, then $R$ has a rational singularity by the corollary to Theorem 4 \cite{C2}, (1) equivalent to (3) of Theorem 4 \cite{C2} and the Proposition on page 60 of \cite{C2}. If $R$ has a rational singularity, then $\oplus_{n\ge 0}I_v(n)$ is Noetherian for all divisorial valuations $v$ of $R$ which dominate $R$ by Proposition \ref{PropBN}.

Finally, we observe that  $\oplus_{n\ge 0}I_v(n)$ is Noetherian if and only if $\sum_{n\ge 0}I_v(n)/I_v(n+1)$ is Noetherian. If $\oplus_{n\ge 0}I_v(n)/I_v(n+1)$ is Noetherian, then this follows from Corollary 1 to Proposition 12 of III.2.9 (page 181) \cite{B}. If $R[\mathcal I]=\oplus_{n\ge 0}I_v(n)\cong \sum_{n\ge 0}I_v(n)t^n$ is Noetherian, then so is the extended Rees Algebra $R[\mathcal I,t^{-1}]$. Thus 
$$
\oplus_{n\ge 0}I_v(n)/I_v(n+1)\cong R[\mathcal I,t^{-1}]/t^{-1}R[\mathcal I,t^{-1}]
$$
 is Noetherian. 

\end{proof}

\begin{Proposition}\label{IntProp} Suppose that $R$ is a complete normal local domain of dimension 2. Then $R[\mathcal I]$ is Noetherian for all divisorial filtrations  of $R$ if and only if the divisor class group $\mbox{Cl}(R)$ is a torsion group.
\end{Proposition}

\begin{proof} If $\mbox{Cl}(R)$ is a torsion group, then $R[\mathcal I]$ is Noetherian for all divisorial filtrations $\mathcal I$ of $R$ by Proposition \ref{PropBN}. If $R[\mathcal I]$ is Noetherian for all divisorial filtrations $\mathcal I$ of $R$ then certainly $R[\mathcal I]$ is Noetherian for all filtrations $\mathcal I=\{I_v(n)\}$ where $v$ is a divisorial valuation which dominates $R$. Thus $\mbox{Cl}(R)$ is a torsion group  by Proposition \ref{SurfSing}.
\end{proof}

The following propositions follow from Propositions \ref{SurfSing} and \ref{IntProp} since if $R$ is a complete normal local domain of dimension two whose residue field is algebraically closed of characteristic zero, then $\mbox{Cl}(R)$ is a torsion group if and only if $R$ has a rational singularity by Propositions 17.1 and 17.3  \cite{L}.

\begin{Proposition}\label{SurfSing'} Suppose that $R$ is a complete normal local domain of dimension two whose residue field is algebraically closed of characteristic zero. Then the following are equivalent
\begin{enumerate}
\item[1)] 
$\oplus_{n\ge 0}I_v(n)$ is Noetherian for all $m_R$-valuations $v$ of $R$.
\item[2)] $\oplus_{n\ge 0}I_v(n)/I_v(n+1)$
 is Noetherian for all $m_R$-valuations $v$ of $R$.
 \item[3)] $R$ has a rational singularity.
  \end{enumerate}
\end{Proposition}

\begin{Proposition}\label{IntProp'} Suppose that $R$ is a complete normal local domain of dimension 2 whose residue field is algebraically closed of characteristic zero. Then $R[\mathcal I]$ is Noetherian for all divisorial filtrations  of $R$ if and only if $R$ has a rational singularity.
\end{Proposition}

If $k$ is algebraically closed of positive characteristic, then there exist non rational two dimensional singularities $R$ over $k$ with torsion divisor class groups showing that the above two propositions are not true in positive characteristic. A simple example is given on page 428 of \cite{C0}. Let $k$ be algebraically closed of positive characteristic. The simple elliptic singularity  
$$
R=k[[x,y,z]]/(z^2-y^3-x^7)
$$
 of \cite{La} is a nonrational singularity, with $\mbox{Cl}(R)\cong k^+$, which is a torsion group (in positive characteristic).

\section{A divisorial filtration $\mathcal I$ on a 3 dimensional normal excellent local ring such that $\mbox{Proj}(R[\mathcal I])$ is not Noetherian.}\label{SecDiv}

\begin{Lemma}\label{GaussDivVal} Let $R$ be a Noetherian local domain with quotient field $K$ and let $v$ be a valuation of $K$ which dominates $R$ which is a divisorial valuation with respect to $R$. Let $K(z)$ be a rational function field over $K$ and and $w$ be the Gauss valuation of $K(z)$ defined by 
$$
w(f(z))=\min\{v(a_i)+i\mid a_i\ne 0\}
$$
if $0\ne f(z)=a_0+a_1z+\cdots+a_dz^d\in K[z]$ with $a_i\in K$. Then $w$ is a divisorial valuation with respect to $A=R[z]$ with center at the maximal ideal $\mathfrak m=m_RA+zA$ of $A$.
\end{Lemma}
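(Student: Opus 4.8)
The plan is to recognize $w$, after a linear change of variable, as an ordinary Gauss valuation, for which the residue field is classically understood, and then to check directly the numerical condition of Definition~\ref{DivvalDef}.

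Since $v$ is a divisorial valuation of $R$, its value group $vK$ equals $\ZZ$, so I may choose $t\in K$ with $v(t)=1$ and set $u=z/t$, so that $K(z)=K(u)$. Writing $0\ne f(z)=\sum_i a_iz^i\in K[z]$ as the polynomial $\sum_i (a_i t^i)u^i$ in the variable $u$, one checks at once that
$$
w(f)=\min_i\{v(a_i)+i\}=\min_i\{v(a_i t^i)\},
$$
so $w$ is precisely the Gauss valuation of $K(u)$ extending $v$. In particular $w$ is a valuation of $K(z)$ with $w|_K=v$; moreover $\mathcal O_w=(\mathcal O_v[u])_{m_v\mathcal O_v[u]}$, so the residue field $K(z)w$ is the fraction field of $\mathcal O_v[u]/m_v\mathcal O_v[u]=Kv[\theta]$, i.e. $K(z)w=Kv(\theta)$ is a purely transcendental extension of $Kv$ of transcendence degree one, where $\theta$ is the image of $u$.

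Next I would compute the center. As $v$ dominates $R$ we have $m_v\cap R=m_R$ and $v\ge 0$ on $R$, so $w\ge 0$ on $A=R[z]$, i.e. $A\subset\mathcal O_w$. For $f=\sum_i a_iz^i\in A$ with $a_i\in R$ we have $v(a_i)\ge 0$, hence $v(a_i)+i>0$ for every $i\ge 1$, while $v(a_0)>0$ precisely when $a_0\in m_R$; thus $w(f)>0$ if and only if $a_0\in m_R$, which says $m_w\cap A=m_RA+zA=\mathfrak m$. So $w$ has center $\mathfrak m$ on $A$, and $A/\mathfrak m=R/m_R$ is a field, which I denote $k$.

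It remains to verify $\operatorname{trdeg}_k K(z)w=\operatorname{ht}(\mathfrak m)-1$. For the height, $z$ is a nonzerodivisor of the domain $A$, $A/zA\cong R$, and $\mathfrak m/zA$ corresponds to $m_R$ under this isomorphism; since $\dim(S/xS)=\dim S-1$ for a nonzerodivisor $x$ in a Noetherian local domain $S$,
$$
\operatorname{ht}(\mathfrak m)=\dim A_{\mathfrak m}=\dim(A_{\mathfrak m}/zA_{\mathfrak m})+1=\dim R+1.
$$
On the other hand, since $v$ is divisorial with respect to $R$ and dominates $R$, Definition~\ref{DivvalDef} (with $P=m_R$) gives $\operatorname{trdeg}_k Kv=\operatorname{ht}(m_R)-1=\dim R-1$. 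Combining this with $K(z)w=Kv(\theta)$ established above,
$$
\operatorname{trdeg}_k K(z)w=\operatorname{trdeg}_k Kv+1=\dim R=\operatorname{ht}(\mathfrak m)-1,
$$
and, together with $(A/\mathfrak m)_{\mathfrak m}=A/\mathfrak m=k$, this is exactly the assertion that $w$ is a divisorial valuation of $A$ with center $\mathfrak m$. All the steps are routine; the one idea that organizes the proof is the substitution $u=z/t$ with $v(t)=1$, and the only point needing a little care is the height computation $\operatorname{ht}(\mathfrak m)=\dim R+1$.
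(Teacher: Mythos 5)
Your proof is correct, and it reaches the conclusion by a route that differs from the paper's in two ways worth noting. The shared idea is the substitution by a uniformizer: you set $u=z/t$ with $v(t)=1$, exactly as the paper sets $x=z/b$ with $v(b)=1$. But where the paper only proves that the residue of $z/b$ is \emph{transcendental} over $Kv$ (by assuming an algebraic relation and deriving a valuation-theoretic contradiction), and then has to invoke the dimension (Abhyankar) inequality $\mbox{trdeg}_{A/\mathfrak m}K(z)w\le \mbox{ht}(\mathfrak m)-1$ to convert the resulting lower bound into an equality, you identify the residue field exactly: after the change of variable, $w$ is the classical Gauss valuation extending $v$, with $\mathcal O_w=(\mathcal O_v[u])_{m_v\mathcal O_v[u]}$ and $K(z)w=Kv(\theta)$ purely transcendental of degree one. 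That pins down $\mbox{trdeg}_{Kv}K(z)w=1$ on the nose, so no dimension inequality is needed; the price is that you must compute $\mbox{ht}(\mathfrak m)=\dim R+1$ directly, which you do correctly via $A/zA\cong R$ and the principal-ideal argument in the local domain $A_{\mathfrak m}$ (the paper simply uses $\mbox{ht}(\mathfrak m)=\dim A$). Your center computation agrees with the paper's. The only ingredient you quote without proof is the standard description of the Gauss valuation ring and its residue field (Bourbaki, Ch.~VI); since the value group of $v$ is $\ZZ$, the scaling needed to reduce a fraction $f/g$ to one with $w(g)=0$ is available, so this citation is legitimate. In short: your argument trades the paper's weaker transcendence claim plus the Abhyankar inequality for the stronger classical structure theorem on Gauss extensions plus an explicit height computation; both are complete proofs.
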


\begin{proof} We have that $A\subset \mathcal O_{w}$ and $m_{w}\cap A=\mathfrak m$. There exists $b\in \mathcal O_{v}$ such that $v(b)=1$. Let $x=\frac{z}{b}\in \mathcal O_{\omega}$. Suppose that the residue $\overline{x}$ of $x$ in $K(z)w$ is algebraic over $Kv$. Then there exists $r\in \ZZ_{>0}$ and  $a_0,a_1,\ldots,a_{r-1}\in \mathcal O_v$ such that
\begin{equation}\label{eq1}
\overline{a_0}+\overline{a_1}\,\overline x+\cdots+\overline{a_{r-1}}\,\overline x^{r-1}+\overline x^r=0
\end{equation}
where $\overline a_i$ is the residue of $a_i$ in $Kv$. By equation (\ref{eq1}) we have that
$$
w(a_0b^r+a_1b^{r-1}z+\cdots+a_{r-1}bz^{r-1}+z^r)>rv(b).
$$
But $v(a_ib^{r-i})\ge r-i$ so 
$w(a_ib^{r-i}z^i)\ge rv(b)$ for all $i$, so that 
$$
w(a_0b^r+a_1b^{r-1}z+\cdots+a_{r-1}bz^{r-1}+z^r)=w(z^r)=r
$$
giving a contradiction. Thus $\mbox{trdeg}_{Kv}K(z)w\ge 1$. Since $A/\mathfrak m=R/m_R$,
$$
\mbox{trdeg}_{A/\mathfrak m}K(z)w=\mbox{trdeg}_{R/m_R}Kv+\mbox{trdeg}_{Kv}K(z)w\ge \dim A-1.
$$
Now $\mbox{trdeg}_{A/\mathfrak m}K(z)w\le \mbox{ht}(\mathfrak m)-1=\dim A-1$, by the dimension inequality. Thus $\mbox{trdeg}_{A/\mathfrak m}K(z)w= \dim A-1=\mbox{ht}(\mathfrak m)-1$.
\end{proof}

For $n\in \NN$, let $I_v(n)=\{f\in R\mid v(f)\ge n\}$, and 
$I_w(n)=\{f\in A\mid w(f)\ge n\}$. Then 
$$
I_w(n)=\sum_{i+j=n}I_v(i)z^jA.
$$
Let
$J_w(n)=I_w(n)A_{\mathfrak m}=\{f\in A_{\mathfrak m}\mid w(f)\ge n\}$. Let $\mathcal I=\{I_w(n)\}$ be the graded filtration of $A$, with Rees algebra
$$
A[\mathcal I]=\sum_{n\ge 0}I_w(n)t^n\subset A[t],
$$
where $A[t]$ is a polynomial ring over $A$. Let $\mathcal J=\{J_{\omega}(n)\}$ be the graded filtration of $A_{\mathfrak m}$ with Rees algebra 
$$
A_{\mathfrak m}[\mathcal J]=\sum_{n\ge 0} J_w(n)t^n\subset A_{\mathfrak m}[t].
$$
We have that $A_{\mathfrak m}[\mathcal J]=A[\mathcal I]\otimes_AA_{\mathfrak m}$ and  $A_{\mathfrak m}[\mathcal I]_{(zt)}=A[\mathcal J]_{(zt)}\otimes_AA_{\mathfrak m}$.

\begin{Proposition}\label{PropEx1}  Let notation be as in Lemma \ref{GaussDivVal}. Suppose that the associated graded ring  $\oplus_{n\ge 0}I_v(n)/I_v(n+1)$ is not Noetherian. 
Then $A_{\mathfrak m}[\mathcal J]_{(zt)}$ and $A_{\mathfrak m}[\mathcal J]_{(zt)}/\mathfrak m [\mathcal J]_{(zt)}$ are not  Noetherian rings. Thus the schemes $\mbox{Proj}(A_{\mathfrak m}[\mathcal J])$ and $\mbox{Proj}(A_{\mathfrak m}[\mathcal J]/\mathfrak m
A_{\mathfrak m}[\mathcal J])$
are not Noetherian.
\end{Proposition}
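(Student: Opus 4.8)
The plan is to compute the distinguished affine open $D_+(zt)$ of $\mbox{Proj}(A[\mathcal I])$ explicitly and to observe that killing $z$ (equivalently, killing $\mathfrak m$) turns it into the associated graded ring $G:=\bigoplus_{n\ge 0}I_v(n)/I_v(n+1)$; the passage from $A$ to $A_{\mathfrak m}$ will then change nothing. Since $w(z)=1$ we have $z\in I_w(1)$, so $zt$ is homogeneous of degree $1$ in $A[\mathcal I]$ and is a nonzerodivisor (the Rees algebra $A[\mathcal I]\subset A[t]$ is a domain); hence $A[\mathcal I]_{(zt)}$ is the degree-zero part of $A[\mathcal I][1/(zt)]$, consisting of the fractions $g/z^n$ with $g\in I_w(n)$. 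Feeding in
$$
I_w(n)=\sum_{i+j=n}I_v(i)z^jA=I_v(n)+I_v(n-1)z+\cdots+I_v(1)z^{n-1}+z^nA,
$$
one checks that inside the Laurent ring $R[z,z^{-1}]$,
$$
B:=A[\mathcal I]_{(zt)}=\bigoplus_{m\in\ZZ}C_mz^m,\qquad C_m=R\ \ \text{for }m\ge 0,\qquad C_m=I_v(-m)\ \ \text{for }m<0,
$$
a $\ZZ$-graded subring of $R[z,z^{-1}]$ (graded by the power of $z$).

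Next I pass to quotients. Because $v$ dominates $R$ we have $m_R\subset I_v(1)$ and $m_RI_v(j)\subset I_v(j+1)$ for all $j$, so $m_RB\subset zB$ and hence $\mathfrak m B=m_RB+zB=zB$. Comparing the graded piece $C_mz^m$ of $B$ with the graded piece $C_{m-1}z^m$ of $zB$, one sees that $B/zB$ vanishes in positive degrees, that in degree $-n$ it is $I_v(n)/I_v(n+1)$, and that the induced multiplication is exactly that of the associated graded ring; thus $B/zB\cong G$ as rings. By the identity $A_{\mathfrak m}[\mathcal J]_{(zt)}=A[\mathcal I]_{(zt)}\otimes_AA_{\mathfrak m}$ recorded just before the statement, $B':=A_{\mathfrak m}[\mathcal J]_{(zt)}=B\otimes_AA_{\mathfrak m}$, and flatness of localization gives $\mathfrak m B'=zB'$ and $B'/zB'=G\otimes_AA_{\mathfrak m}$. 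The $A$-algebra structure on $G=B/zB$ kills $z$ and carries $R$ onto the field $R/m_R=G_0$, so it factors through $A/\mathfrak m=R/m_R$; hence the image of the multiplicative set $A\setminus\mathfrak m$ in $G$ consists of units, and $G\otimes_AA_{\mathfrak m}=G$. Therefore
$$
A_{\mathfrak m}[\mathcal J]_{(zt)}/\mathfrak m A_{\mathfrak m}[\mathcal J]_{(zt)}=B'/\mathfrak m B'=B'/zB'\cong G.
$$

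It remains to draw the conclusions. Since $G$ is not Noetherian by hypothesis, the ring $B'/\mathfrak m B'\cong G$ is not Noetherian, and $B'=A_{\mathfrak m}[\mathcal J]_{(zt)}$, which surjects onto it, is not Noetherian either. For the schemes: $D_+(zt)$ is an affine open subscheme of $\mbox{Proj}(A_{\mathfrak m}[\mathcal J])$ with coordinate ring $B'$, so were $\mbox{Proj}(A_{\mathfrak m}[\mathcal J])$ a Noetherian scheme, this open subscheme would be a Noetherian affine scheme and $B'$ a Noetherian ring, a contradiction. Likewise, for the fiber cone $F:=A_{\mathfrak m}[\mathcal J]/\mathfrak m A_{\mathfrak m}[\mathcal J]$ one has $\overline{zt}\ne 0$ in $F$ (because $z\notin\mathfrak m^2A_{\mathfrak m}$), so $D_+(\overline{zt})$ is an affine open subscheme of $\mbox{Proj}(F)$ with coordinate ring $F_{(\overline{zt})}=B'/\mathfrak m B'\cong G$, whence $\mbox{Proj}(F)$ is not Noetherian. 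The only step needing genuine care is the explicit identification $A[\mathcal I]_{(zt)}=\bigoplus_mC_mz^m$ together with the verification that inverting $A\setminus\mathfrak m$ does not render $G$ Noetherian; the rest is formal manipulation.
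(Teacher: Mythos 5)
Your proof is correct and follows essentially the same route as the paper: you identify the chart $A[\mathcal I]_{(zt)}$ with the $\ZZ$-graded subring $\cdots\oplus I_v(n)z^{-n}\oplus\cdots\oplus R\oplus zR\oplus\cdots$ of $R[z,z^{-1}]$, observe that modding out $\mathfrak m$ (equivalently $z$) yields the associated graded ring $\oplus_{n\ge 0}I_v(n)/I_v(n+1)$, note that localizing at $\mathfrak m$ changes nothing, and conclude non-Noetherianity of the affine chart and hence of both Proj's. Your added checks (that $\mathfrak m B=zB$, that $G\otimes_AA_{\mathfrak m}=G$, and that $\overline{zt}\ne 0$ in the fiber cone) are just explicit versions of steps the paper leaves implicit.
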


\begin{proof} Let $T=A[\mathcal I]_{(zt)}$, the elements of degree 0 in the localization $A[\mathcal I]_{zt}$.
$$
\begin{array}{lll}
T&=&\sum_{n\ge 0}(\sum_{i+j=n}\frac {I_v(i)z^j}{z^n}A)\\
&=&A+\frac{I_w(1)}{z}+\frac{I_w(2)}{z^2}+\cdots+\frac{I_w(n)}{z^n}+\cdots\\
&=&A+\frac{I_v(1)}{z}+\frac{I_v(2)}{z^2}+\cdots+\frac{I_v(n)}{z^n}+\cdots\subset A[\frac{1}{z}],
\end{array}
$$
 and so,
$$
T=\cdots\oplus \frac{I_v(n)}{z^n}\oplus\frac{I_v(n-1)}{z^{n-1}}\oplus\cdots\oplus\frac{I_v(1)}{z}\oplus R\oplus zR\oplus z^2R\oplus\cdots
$$
since $I_v(i+1)\subset I_v(i)$ for all $i$.  Thus $T$  is a graded $R$-algebra, graded by $\deg_z$. 

For $n>0$, 
$$
z\left(\frac{I_v(n)}{z^n}\right)=\frac{I_v(n)}{z^{n-1}}\mbox{ and }m_R\frac{I_v(n)}{z^n}=\frac{m_RI_v(n)}{z^n}
\subset \frac{I_v(n+1)}{z^n}.
$$
For $n\ge 0$, 
$$
z(z^nR)=z^{n+1}R\mbox{ and }m_Rz^nR=z^nm_RR.
$$
Thus
$$
T/\mathfrak m T\cong 
\cdots+\frac{1}{z^n}I_v(n)/I_v(n+1)+\cdots+\frac{1}{z}I_v(1)/I_v(2)+R/m_R
\cong \oplus_{n\ge 0}I_v(n)/I_v(n+1).
$$
$A=R\oplus zR\oplus z^2R\oplus \cdots\subset T$ so $A(T/\mathfrak m T)=R/m_R$. Thus 
$$
(A\setminus \mathfrak m)(T/\mathfrak m T)=\{f\in R/m_R\mid f\ne 0\}
$$
are the units in $T/\mathfrak m T$ and so
$(T/\mathfrak m T)\otimes_A A_{\mathfrak m}\cong T/\mathfrak m T$, and thus 
$$
A_{\mathfrak m}[\mathcal J]_{(zt)}/\mathfrak mA_{\mathfrak m} [\mathcal J]_{(zt)}     \cong T/\mathfrak m T
$$
 is not Noetherian, and therefore
 $$
 A_{\mathfrak m}[\mathcal J]_{(zt)}\cong T\otimes_AA_{\mathfrak m }
 $$
  is not Noetherian. $\mbox{Spec}( A_{\mathfrak m}[\mathcal J]_{(zt)})$ is a non Noetherian   affine open subset of 
  $\mbox{Proj}(A_{\mathfrak m}[\mathcal J])$ and $\mbox{Spec}(A_{\mathfrak m}[\mathcal J]_{(zt)}/{\mathfrak m}A_{\mathfrak m}[\mathcal J]_{(zt)})$  is a non Noetherian affine open subset of $\mbox{Proj}(A_{\mathfrak m}[\mathcal J]/\mathfrak m A_{\mathfrak m}[\mathcal J])$.  
  Thus $\mbox{Proj}(A_{\mathfrak m}[\mathcal J])$ and $\mbox{Proj}(A_{\mathfrak m}[\mathcal J]/\mathfrak m A_{\mathfrak m}[\mathcal J])$
  are non Noetherian.  
    \end{proof}

The following theorem follows from Proposition \ref{PropEx1}. 
\begin{Theorem} \label{Theorem2} There exist three dimensional normal excellent local rings $S$  and divisorial filtrations of $m_S$-primary ideals in $S$ such that $\mbox{Proj}(S[\mathcal I])$ is not Noetherian. 
\end{Theorem}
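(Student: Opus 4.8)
The plan is to deduce the statement directly from Proposition~\ref{PropEx1}, applied to the extension $R\rightsquigarrow A=R[z]\rightsquigarrow S=A_{\mathfrak m}$ of Lemma~\ref{GaussDivVal}. First I would fix a two-dimensional complete normal local domain $R$ with algebraically closed residue field of characteristic zero that does \emph{not} have a rational singularity --- for instance the cone $R=\CC[[x,y,z]]/(x^3+y^3+z^3)$ over a smooth plane cubic, which is a normal surface hypersurface singularity and hence Noetherian, normal, local and excellent. By Proposition~\ref{SurfSing} such an $R$ carries a divisorial valuation $v$ dominating $m_R$ for which the associated graded ring $\oplus_{n\ge 0}I_v(n)/I_v(n+1)$ is not Noetherian; this is precisely the hypothesis needed to invoke Proposition~\ref{PropEx1}.

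Next, with $A=R[z]$, $\mathfrak m=m_RA+zA$, $S=A_{\mathfrak m}$ and $w$ the Gauss valuation attached to $v$ as in Lemma~\ref{GaussDivVal}, I would check that $S$ is a three-dimensional normal excellent local ring and that $\mathcal J=\{J_w(n)\}$ is a divisorial filtration of $m_S$-primary ideals. For the first point: $S$ is a localization of $R[z]$, hence normal and excellent since $R$ is; it is local by construction; and $\dim S=\mathrm{ht}(\mathfrak m)=3$, since a saturated chain of primes of length $\dim R=2$ in $R$ extends through $m_RA$ by $\mathfrak m$, while $\mathrm{ht}(\mathfrak m)\le\dim R[z]=3$. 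For the second point: Lemma~\ref{GaussDivVal} gives that $w$ is a divisorial valuation of $A$ with center $\mathfrak m$; since $A$ and $S$ have the same fraction field, the same residue field $A/\mathfrak m=S/m_S$, and $\mathrm{ht}_A(\mathfrak m)=\dim S$, the transcendence-degree identity established in Lemma~\ref{GaussDivVal} shows $w$ is likewise a divisorial valuation of $S$ with center $m_S$. In particular $w$ is a discrete valuation nonnegative on $S$ (it vanishes on $A\setminus\mathfrak m$, since such elements are units in the valuation ring $\mathcal O_w$), so $\mathcal J=\{J_w(n)\}$ with $J_w(n)=\{f\in S\mid w(f)\ge n\}$ is a divisorial filtration of $S$ (a single valuation, $a_1=1$). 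Finally, because $v$ dominates $m_R$ and is nonnegative on $R$, one gets $I_v(1)=m_R$, hence $I_w(1)=I_v(1)A+zA=\mathfrak m$ and $J_w(1)=m_S$; since $w$ is a valuation, $m_S^n=J_w(1)^n\subseteq J_w(n)\subsetneq S$ for every $n\ge 1$, so each $J_w(n)$ is $m_S$-primary.

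Taking $\mathcal I=\mathcal J$, this exhibits a divisorial filtration of $m_S$-primary ideals on the three-dimensional normal excellent local ring $S$, with $S[\mathcal I]=A_{\mathfrak m}[\mathcal J]$. Proposition~\ref{PropEx1}, whose hypothesis was arranged in the first step, then yields that $\mbox{Proj}(S[\mathcal I])$ is not Noetherian, completing the proof. The substantive work is already contained in Proposition~\ref{PropEx1} --- the identification of the affine chart $A[\mathcal I]_{(zt)}$ inside $A[1/z]$ and the computation of its quotient modulo $\mathfrak m$ as $\oplus_{n\ge 0}I_v(n)/I_v(n+1)$; accordingly the only thing requiring care here is the bookkeeping of the second paragraph, namely confirming that $S$ has the asserted properties and that $\mathcal J$ genuinely qualifies as a divisorial filtration of $m_S$-primary ideals. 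That verification, rather than any new estimate, is the (modest) main obstacle.
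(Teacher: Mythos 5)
Your proposal is correct and follows exactly the route the paper intends: the paper derives Theorem \ref{Theorem2} directly from Proposition \ref{PropEx1} via the Gauss-valuation construction of Lemma \ref{GaussDivVal}, with Proposition \ref{SurfSing} supplying a two-dimensional $R$ and a divisorial valuation $v$ whose associated graded ring is not Noetherian. Your additional bookkeeping (excellence, normality and $\dim S=3$ for $S=A_{\mathfrak m}$; $w$ divisorial over $S$ with center $m_S$ since elements of $A\setminus\mathfrak m$ are units in $\mathcal O_w$; $J_w(1)=m_S$ forcing each $J_w(n)$ to be $m_S$-primary) is accurate and simply makes explicit what the paper leaves implicit.
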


\section{an invariant}\label{SecInv} 
In this section, we extend some material in Section 10 of \cite{C1}. The proofs are the same. Let $R$ be a domain with quotient field $K$ and $\mathcal I=\{I_n\}$ be a graded family of ideals in $R$. 
Let $v$ be a discrete valuation of $K$ which is nonnegative on $R$. The facts that
$I_m^n\subset I_{mn}$ and $v(I_m^n)=nv(I_m)$ imply
$$
\frac{v(I_{mn})}{mn}\le \frac{nv(I_m)}{mn}=\frac{v(I_m)}{m},
$$
and so 
\begin{equation}\label{eqIn2}
\frac{v(I_{mn})}{mn}\le
\min\{\frac{v(I_m)}{m},\frac{v(I_n)}{n}\}.
\end{equation}
We define
$$
\gamma_v(\mathcal I)=\inf\{\frac{v(I_n)}{n}\}.
$$
where the infimum is over all positive $n$. 
%This invariant is also defined earlier in Subsection \ref{SubIn} of the introduction.

By (\ref{eqIn2}), if $\frac{v(I_m)}{m}=\gamma_v(\mathcal I)$ for some $m>0$, then
$$
\frac{v(I_{mn})}{mn}=\gamma_v(\mathcal I)\mbox{ for all $n>0$.}
$$

\begin{Proposition}\label{Prop6} Suppose that $v$ is a valuation of $K$ which is nonnegative on  $R$. Then $v$ has a center on 
$Z(\mathcal I)=\mbox{Proj}(R[\mathcal I])$ if and only if there exists a positive integer $m$ such that 
$$
\frac{v(I_m)}{m}=\gamma_v(\mathcal I).
$$
\end{Proposition}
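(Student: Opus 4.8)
The plan is to reduce the statement to an explicit computation on the standard affine cover of $Z(\mathcal I)$ and then run a short inequality argument using the submultiplicativity recorded just before $(\ref{eqIn2})$. Since $R[\mathcal I]=\bigoplus_{n\ge 0}I_nt^n$ is a domain sitting inside $K[t]$, for $m\ge 1$ and $0\neq f\in I_m$ the homogeneous localization is
$$
R[\mathcal I]_{(ft^m)}=\Bigl\{\,\frac{g}{f^{k}}\ :\ k\ge 0,\ g\in I_{mk}\,\Bigr\}\subseteq K ,
$$
and the affine opens $\mbox{Spec}\bigl(R[\mathcal I]_{(ft^m)}\bigr)$, as $m$ and $f$ vary, cover $Z(\mathcal I)=\mbox{Proj}(R[\mathcal I])$. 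Because having a center is a local condition, I would first record the formal equivalence: $v$ has a center on $Z(\mathcal I)$ if and only if $R[\mathcal I]_{(ft^m)}\subseteq\mathcal O_v$ for some such pair $(m,f)$. Indeed, if a center $\xi$ lies in $\mbox{Spec}(R[\mathcal I]_{(ft^m)})$ then $R[\mathcal I]_{(ft^m)}\subseteq\mathcal O_{Z(\mathcal I),\xi}\subseteq\mathcal O_v$; conversely, if $R[\mathcal I]_{(ft^m)}\subseteq\mathcal O_v$, then $\mathfrak p=m_v\cap R[\mathcal I]_{(ft^m)}$ is a proper prime (since $v(1)=0$), and one checks directly — using that $v$ vanishes on $R[\mathcal I]_{(ft^m)}\setminus\mathfrak p$ — that the associated local ring is dominated by $\mathcal O_v$. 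Thus the assertion reduces to: some pair $(m,f)$ satisfies $R[\mathcal I]_{(ft^m)}\subseteq\mathcal O_v$ if and only if $v(I_m)/m=\gamma_v(\mathcal I)$ for some $m$.

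For $(\Leftarrow)$, given $m$ with $v(I_m)/m=\gamma_v(\mathcal I)$, I would choose $f\in I_m$ with $v(f)=v(I_m)$ (possible since $I_m$ is finitely generated). Then for every $k\ge 1$ and every $g\in I_{mk}$,
$$
v(g)\ \ge\ v(I_{mk})\ \ge\ mk\,\gamma_v(\mathcal I)\ =\ k\,v(I_m)\ =\ k\,v(f),
$$
the middle inequality being the definition of $\gamma_v(\mathcal I)$ as an infimum, so $g/f^{k}\in\mathcal O_v$; hence $R[\mathcal I]_{(ft^m)}\subseteq\mathcal O_v$.

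For $(\Rightarrow)$, I would start from a pair $(m,f)$ with $R[\mathcal I]_{(ft^m)}\subseteq\mathcal O_v$. This forces $v(I_{mk})\ge k\,v(f)$ for every $k\ge 1$; moreover $v(I_m)\le v(f)$ because $f\in I_m$, and $v(I_{mk})/(mk)\le v(I_m)/m$ by the inequality stated just before $(\ref{eqIn2})$. Chaining these,
$$
\frac{v(f)}{m}\ \le\ \frac{v(I_{mk})}{mk}\ \le\ \frac{v(I_m)}{m}\ \le\ \frac{v(f)}{m},
$$
so all four quantities coincide; in particular $v(I_{mk})/(mk)=v(I_m)/m$ for every $k\ge 1$. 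Finally, for an arbitrary $n\ge 1$, the symmetric form of that same basic inequality gives $v(I_{mn})/(mn)\le v(I_n)/n$, while the case $k=n$ above gives $v(I_{mn})/(mn)=v(I_m)/m$; hence $v(I_m)/m\le v(I_n)/n$ for all $n$, that is, $v(I_m)/m=\gamma_v(\mathcal I)$.

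I expect the only real friction to be bookkeeping: fixing the precise definition of ``center'' in force and verifying the purely formal equivalence with the containment $R[\mathcal I]_{(ft^m)}\subseteq\mathcal O_v$ (in particular that the resulting local ring is genuinely dominated by $\mathcal O_v$, not merely contained in it), together with the harmless point that $v(I_m)$ is attained because $R$ is Noetherian. The arithmetic heart — the four-term chain of inequalities — is immediate from $(\ref{eqIn2})$ and mirrors the argument of Section 10 of \cite{C1}.
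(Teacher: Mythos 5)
Your proposal is correct and follows essentially the same route as the paper's proof: both reduce the existence of a center to the containment $R[\mathcal I]_{(ft^m)}\subseteq \mathcal O_v$ for a degree-$m$ homogeneous element $ft^m$ in the chart through the center, and both run the same comparison of $v(f)$, $v(I_m)$ and $v(I_{mk})$ via the inequality (\ref{eqIn2}). The only difference is cosmetic: you prove the forward direction directly (showing the specific $m$ from the chart achieves $\gamma_v(\mathcal I)$), whereas the paper argues by contradiction with an element $H/F^n$ of negative value.
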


\begin{proof} First suppose that there does not exist $m$ such that $\frac{v(I_m)}{m}=\gamma_v(\mathcal I)$ but $v$ does have a center on $Z(\mathcal I)$. Then there exists a homogeneous prime ideal $P$ in $R[\mathcal I]$ such that $\oplus_{n>0}I_n\not \subset P$ and $R[\mathcal I]_{(P)}$ is dominated by $\mathcal O_v$.  Thus, there exists $F\in I_m$ for some $m$ such that $0\ne Ft^m\not\in P$, and $R[\mathcal I]_{(Ft^m)}\subset R[\mathcal I]_{(P)}\subset \mathcal O_v$. By assumption, there exists $n$ such that $
\frac{v(I_n)}{n}<\frac{v(I_m)}{m}$. So $\frac{v(I_{mn})}{mn}<\frac{v(I_m)}{m}$ by (\ref{eqIn2}). So there exists an  element $H\in I_{mn}$ such that 
$\frac{v(H)}{mn}=\frac{v(I_{mn})}{mn}<\frac{v(I_m)}{m}$. Since $v(F)\ge v(I_m)$, this implies that $v(\frac{H}{F^n})<0$, so that $\frac{H}{F^n}\not\in \mathcal O_v$. But then
$$
\frac{H}{F^n}=\frac{Ht^{mn}}{(Ft^m)^n}\in R[\mathcal I]_{(Ft^m)}
$$
is not in $\mathcal O_v$, a contradiction. Thus if there does not exist $m$ such that $\frac{v(I_m)}{m}=\gamma_v(\mathcal I)$ then $v$ doesn't have a center on $Z(\mathcal I)$.

Now suppose that there exists $m_0$ such that $\frac{v(I_{m_0})}{m_0}=\gamma_v(\mathcal I)$. Then by (\ref{eqIn2}),
$$
\frac{v(I_{nm_0})}{nm_0}\le \frac{v(I_{m_0})}{m_0}=\gamma_v(\mathcal I)
$$
implies
$$
\frac{v(I_{nm_0})}{nm_0}=\gamma_v(\mathcal I)
$$
for $n\ge 1$.
%We may replace $\Delta$ with $m_0\Delta$, so that $m_0=1$. 
There exists $F\in I_{m_0}$ such that $v(F)=v(I_{m_0})=m_0\gamma_v(\mathcal I)$. Suppose that $x\in R[\mathcal I]_{(Ft^{m_0})}$. Then for some $n>0$
and $G\in I_{m_0n}$,
$$
x=\frac{Gt^{nm_0}}{(Ft^{m_0})^n}=\frac{G}{F^n}.
$$
 So
$$
\frac{v(x)}{nm_0}=\frac{v(G)-nv(F)}{nm_0}\ge \frac{v(I_{nm_0})-nv(F)}{nm_0}=0.
$$
Thus $x\in \mathcal O_v$ and so $R[\mathcal I]_{(Ft^{m_0})}\subset \mathcal O_v$. Let $q=m_{v}\cap R[\mathcal I]_{(Ft^{m_0})}$. 
Then $q\in \mbox{Spec}(R[\mathcal I]_{(Ft^{m_0})})\subset Z(\mathcal I)$ and $\mathcal O_{Z(\mathcal I),q}=(R[\mathcal I]_{(Ft^{m_0})})_q$ is a local ring of $Z(\mathcal I)$ which is dominated by $v$.
Thus if there exists $m_0$ such that $\frac{v(I_{m_0})}{m_0}=\gamma_v(\mathcal I)$ then $v$ has a center on $Z(\mathcal I)$.
\end{proof}

\section{A graded filtration whose fiber cone has infinitely many irreducible components}

If $\dim R=2$ (and is normal) and $\mathcal I=\{I_n\}$ is a divisorial filtration of $R$, then $\mbox{Proj}(R[\mathcal I])$ is Noetherian (by \cite{R} and \cite{C1}), so that the closed subscheme
$\mbox{Proj}(R[\mathcal I]/m_RR[\mathcal I])$ is Noetherian. Thus the fiber cone $R[\mathcal I]/m_RR[\mathcal I]$ has only finitely many associated primes.

Let $R$ be a 2 dimensional regular local ring with algebraically closed residue field. 
 Let $X_0=\mbox{Spec}(R)$ and $X_1$ be the blowup of the maximal ideal $m_R$ of $R$, with integral exceptional divisor $E(1)_1$. Let $p_1\in X_1$ be a closed point on $E(1)_1$. Let $X_2\rightarrow X_1$ be the blowup of $p_1$. Let $E(2)_1$ be the strict transform of $E(1)_1$ on $X_2$ and $E(2)_2$ be the integral exceptional divisor mapping to $p_1$.
Let $X_3\rightarrow X_2$ be the blowup of a closed point $p_2\in E(2)_2\setminus E(2)_1$. Let $E(3)_3$ be the integral exceptional divisor mapping to $p_2$. Let $E(3)_1$ be the strict transform of $E(2)_1$ and $E(3)_2$ be the strict transform of $E(2)_2$.
Inductively continue this construction, letting $X_l\rightarrow X_{l-1}$ be the blowup of a point $p_{l-1}\in E(l-1)_{l-1}\setminus E(l-1)_{l-2}$. Let $E(l)_l$ be the integral exceptional divisor mapping to $p_{l-1}$ and $E(l)_i$ be the strict transform of $E(l-1)_i$ for $i<l$. Then $X_l$ is nonsingular, and the reduced exceptional divisor of $X_l\rightarrow \mbox{Spec}(R)$ is the divisor
$E(l)_1+E(l)_2+\cdots+E(l)_l$. 
When $l$ is understood,  we will often write $E_i$ for $E(l)_i$.  For $l>1$, we have the intersection products
$$
(E_1\cdot E_j)=\left\{\begin{array}{ll}
-2&\mbox{ if }j=1\\
1&\mbox{ if }j=2\\
0&\mbox{ otherwise.}
\end{array}\right.
$$
For $1<i<l$,
$$
(E_i\cdot E_j)=\left\{\begin{array}{ll}
1&\mbox{ if $j=i-1$ or $j=i+1$}\\
-2&\mbox{ if }j=i\\
0&\mbox{ otherwise,}
\end{array}\right.
$$
$$
(E_l\cdot E_j)=\left\{\begin{array}{ll}
1&\mbox{ if }j=l-1\\
-1&\mbox{ if }j=l\\
0&\mbox{ otherwise.}
\end{array}\right.
$$
These formulas follow from the fact that the intersection graph of $X_l$ is a linear graph with nodes $E(l)_1,E(l)_2,\ldots, E(l)_l$ and weights  (self intersection numbers) $-2,-2,\ldots,-2,-1$ for $l\ge 2$. We now establish that the intersection graph has this form. We have that $((E(1)_1)^2))=-1$. Let $\phi_2:X_2\rightarrow X_1$ be the blowup of $p_1$. Then
$$
(E(2)_1\cdot(E(2)_1+E(2)_1))=
(E(2)_1\cdot \phi_2^*(E(1)_1))
=((\phi_2)_*(E(2)_1)\cdot E(1)_1)=(E(1)_1\cdot E(1)_1)
$$
by the projection formula. thus
$$
(E(2)_1\cdot E(2)_1)=-1-(E(2)_1\cdot E(2)_2)=-2
$$
establishing the form of the intersection graph for  $l=2$. By induction on $l$, we now find that the graph of $X_l$ has the desired form for all $l$, using the projection formula for $X_l\rightarrow X_{l-1}$, and the fact that $p_{l-1}$ only lies on $E(l-1)_{l-1}$ and not on $E(l-1)_{i}$ for $i<l-1$.

For $l\ge 1$, define a $\QQ$-divisor $F(l)$ on $X_l$ by 
$$
\begin{array}{lll}
F(l)&=&E_1+\frac{3}{2}E_2+\frac{7}{4}E_3+\cdots +\frac{2^{n-1}-1}{2^{n-2}}E_{n-1}+\frac{2^n-1}{2^{n-1}}E_n+\frac{2^{n+1}-1}{2^n}E_{n+1}\\
&&+\cdots +\frac{2^{l-1}-1}{2^{l-2}}E_{l-1}+\frac{2^l-1}{2^{l-1}}E_l.
\end{array}
$$
For integers $m\ge 1$, 
define integral divisors $D(l)_m$ on $X_l$ by
$D(l)_m=\lceil mF(l)\rceil$. If $l$ is understood, we will often denote $D(l)_m$ by $D_m$.

\subsection{Calculation of a family of divisors}

We now compute the intersection numbers $(-D_m\cdot E_n)$ for $l\ge 2$, $m\ge 1$ and $1\le n\le l$.

\noindent{\bf Case 1.} Assume that $n=1$. Then $(-D_m\cdot E_1)=2m-\lceil \frac{3}{2}m\rceil$. Write $m=2x+r$ with $x\in \NN$ and $0\le r<2$. 

\noindent{\bf Case 1.1} Assume $r=0$. Then $\lceil \frac{3}{2}m\rceil =3x$ so that 
$$
(-D_m\cdot E_1)=4x-3x=x.
$$

\noindent{\bf Case 1.2.} Assume $r=1$. Then $\lceil \frac{3}{2}m\rceil =3x+\lceil \frac{3}{2}\rceil=3x+2$ and
$$
(-D_m\cdot E_1)=4x+2-(3x+2)=x.
$$

\noindent{\bf Case 2. $1<n<l$.}
 Then
$$
\begin{array}{lll}
(-D_m\cdot E_n)&=&-\lceil \frac{m(2^{n-1}-1)}{2^{n-2}}\rceil+2\lceil \frac{m(2^n-1)}{2^{n-1}}\rceil-\lceil\frac{m(2^{n+1}-1)}{2^n}\rceil\\
&=& -2m+\lfloor \frac{m}{2^{n-2}}\rfloor+4m-2\lfloor\frac{m}{2^{n-1}}\rfloor-2m+\lfloor \frac{m}{2^n}\rfloor\\
&=& \lfloor \frac{m}{2^{n-2}}\rfloor-2\lfloor \frac{m}{2^{n-1}}\rfloor+\lfloor \frac{m}{2^n}\rfloor.
\end{array}
$$
Write $m=2^nx+r$ with $0\le r<2^n$.

\noindent {\bf Case 2.1: $0\le r<2^{n-2}$. } Then
$$
\lfloor \frac{m}{2^{n-2}}\rfloor =4x+\lfloor \frac{r}{2^{n-2}}\rfloor=4x,
\lfloor \frac{m}{2^{n-1}}\rfloor = 2x+\lfloor \frac{r}{2^{n-1}}\rfloor =2x,
\lfloor \frac{m}{2^n}\rfloor=x+\lfloor \frac{r}{2^n}\rfloor =x.
$$
Thus
$$
(-D_m\cdot E_n)=\lfloor \frac{m}{2^{n-2}}\rfloor -2\lfloor \frac{m}{2^{n-1}}\rfloor +\lfloor \frac{m}{2^n}\rfloor
=4x-4x+x=x.
$$

\noindent{\bf Case 2.2: $2^{n-2}\le r<2^{n-1}$.} Write $r=2^{n-2}+s$ with $0\le s<2^{n-2}$ so that 
$$
\frac{s}{2^{n-1}}<\frac{1}{2}.
$$
We have that
$$
\lfloor \frac{m}{2^{n-2}}\rfloor =\lfloor \frac{2^nx+2^{n-2}+s}{2^{n-2}}\rfloor 
= 4x+1+\lfloor \frac{s}{2^{n-2}}\rfloor =4x+1,
$$
$$
\lfloor \frac{m}{2^{n-1}}\rfloor =\lfloor \frac{2^nx+r}{2^{n-1}}\rfloor =2x+\lfloor \frac{r}{2^{n-1}}\rfloor =2x,
$$
$$
\lfloor \frac{m}{2^n}\rfloor =  \lfloor \frac{2^nx+r}{2^n}\rfloor=    x+\lfloor \frac{r}{2^n}\rfloor =x. 
$$
Thus
$$
(-D_m\cdot E_n)=\lfloor \frac{m}{2^{n-2}}\rfloor -2\lfloor \frac{m}{2^{n-1}}\rfloor+\lfloor\frac{m}{2^n}\rfloor
=4x+1-4x+x=x+1.
$$

\noindent{\bf Case 2.3: $m=2^nx+r$, $2^{n-1}\le r<2^n$.} This implies that $r=2^{n-1}+s$ with $0\le s<2^{n-1}$.

\noindent{\bf Case 2.3.1: $2^{n-2}\le s<2^{n-1}$.} Thus $s=2^{n-2}+t$ with $0\le t<2^{n-2}$. We have
$$
\lfloor \frac{m}{2^{n-2}}\rfloor = \lfloor \frac{2^nx+2^{n-1}+2^{n-2}+t}{2^{n-2}}\rfloor=4x+3+\lfloor \frac{t}{2^{n-2}}\rfloor=4x+3,
$$
$$
\lfloor \frac{m}{2^{n-1}}\rfloor =\lfloor \frac{2^nx+2^{n-1}+s}{2^{n-1}}\rfloor=2x+1+\lfloor \frac{s}{2^{n-1}}\rfloor=2x+1,
$$
$$
\lfloor \frac{m}{2^n}\rfloor = \lfloor \frac{2^nx+r}{2^n}\rfloor=x+\lfloor\frac{r}{2^n}\rfloor =x.
$$
$$
(-D_m\cdot E_n)=\lfloor\frac{m}{2^{n-2}}\rfloor-2\lfloor\frac{m}{2^{n-1}}\rfloor+\lfloor \frac{m}{2^n}\rfloor
=4x+3-2(2x+1)+x=x+1.
$$

{\bf Case 2.3.2: $0\le s<2^{n-2}$.} Then we have
$$
\lfloor \frac{m}{2^{n-2}}\rfloor=\lfloor \frac{2^nx+2^{n-1}+s}{2^{n-2}}\rfloor=4x+2+\lfloor \frac{s}{2^{n-2}}\rfloor=4x+2,
$$
$$
\lfloor \frac{m}{2^{n-1}}\rfloor =\lfloor \frac{2^nx+2^{n-1}+s}{2^{n-1}}\rfloor=2x+1+\lfloor \frac{s}{2^{n-1}}\rfloor =2x+1,
$$
$$
\lfloor \frac{m}{2^n}\rfloor =\lfloor \frac{2^nx+r}{2^n}\rfloor=x+\lfloor \frac{r}{2^n}\rfloor =x.
$$
Thus
$$
(-D_m\cdot E_n)=\lfloor \frac{m}{2^{n-2}}\rfloor -2\lfloor\frac{m}{2^{n-1}}\rfloor+\lfloor \frac{m}{2^n}\rfloor=4x+2-2(2x+1)+x=x.
$$

\noindent{\bf Case 3: $n=l$.} 
$$
\begin{array}{lll}
(-D_m\cdot E_l)&=&-\lceil\frac{m(2^{l-1}-1)}{2^{l-2}}\rceil+\lceil \frac{m(2^l-1)}{2^{l-1}}\rceil\\
&=& -2m+\lfloor \frac{m}{2^{l-2}}\rfloor+2m -\lfloor \frac{m}{2^{l-1}}\rfloor\\
&=& \lfloor \frac{m}{2^{l-2}}\rfloor -\lfloor \frac{m}{2^{l-1}}\rfloor.
\end{array}
$$
Write $m=2^{l-1}x+r$ with $0\le r<2^{l-1}$.

\noindent{\bf Case 3.1: $0\le r<2^{l-2}$.} Then
$$
\lfloor \frac{m}{2^{l-2}}\rfloor =2x+\lfloor \frac{r}{2^{l-2}}\rfloor =2x,
$$
$$
\lfloor\frac{m}{2^{l-1}}\rfloor =x+\lfloor\frac{r}{2^{l-1}}\rfloor=x.
$$
Thus
$$
(-D_m\cdot E_l)=2x-x=x.
$$

\noindent{\bf Case 3.2: $2^{l-2}\le r<2^{l-1}$.} Thus $r=2^{l-2}+s$ with $0\le s<2^{l-2}$. We have
$$
\lfloor \frac{m}{2^{l-2}}\rfloor =2x+1+\lfloor \frac{s}{2^{l-2}}\rfloor=2x+1,
$$
$$
\lfloor\frac{m}{2^{l-1}}\rfloor =x+\lfloor\frac{r}{2^{l-1}}\rfloor=x.
$$
Thus
$$
(-D_m\cdot E_l)=2x+1-x=x+1.
$$

Let $\pi:W\rightarrow \mbox{Spec}(R)$ be a proper morphism and $G$ be a Cartier divisor on $W$. Then $G$ is said to be nef if $(C\cdot G)\ge 0$ for all closed integral curves $C\subset \pi^{-1}(m_R)$.

\begin{Proposition}\label{Prop1} Let the $X_l$ and divisors $D(l)_m$ be as defined above. Then
\begin{enumerate}
\item[1)] $-D(l)_m$ is nef on $X_l$ for all $l$ and $m$.
\item[2)] Let $\pi_{b,a}:X_b\rightarrow X_a$ be the natural $R$-morphism for $b\ge a$. Then
$$
\pi_{b,a}^*(D(a)_m)=D(b)_m\mbox{ if }m<2^{a-1}.
$$
\item[3)] Suppose that $1\le n\le l$. Then 
$$
(-D(l)_m\cdot E(l)_n)>0 \mbox{ if $m>2^n$.}
$$
\end{enumerate}
\end{Proposition}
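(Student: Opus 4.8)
The plan is to derive all three parts from the intersection-number computations carried out just above, together with one blow-up pull-back calculation. For part 1), note first that $X_l$ is regular, so each $D(l)_m$ is Cartier, and that the integral curves contained in the fibre $\pi^{-1}(m_R)$ of the structure morphism $\pi\colon X_l\to\mbox{Spec}(R)$ are precisely the exceptional prime divisors $E(l)_1,\dots,E(l)_l$ (as a set, $\pi^{-1}(m_R)$ is their union). Hence $-D(l)_m$ is nef if and only if $(-D(l)_m\cdot E(l)_n)\ge 0$ for $1\le n\le l$, and for $l\ge 2$ this is exactly what Cases 1, 2 and 3 above establish, since in each case $(-D(l)_m\cdot E(l)_n)$ was found to equal $x$ or $x+1$ for a nonnegative integer $x$; for $l=1$ one has $F(1)=E_1$, $D(1)_m=mE_1$ and $(E_1\cdot E_1)=-1$, so $(-D(1)_m\cdot E_1)=m\ge 0$. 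Part 3) is read off from the same case analysis: for $l\ge 2$ and $1<n<l$, writing $m=2^nx+r$ with $0\le r<2^n$, Cases 2.1 through 2.3.2 give $(-D(l)_m\cdot E(l)_n)\in\{x,x+1\}$; for $n=1$ it equals $\lfloor m/2\rfloor$, and for $n=l$ it lies in $\{x,x+1\}$ with $x=\lfloor m/2^{l-1}\rfloor$. In every instance the hypothesis $m>2^n$ forces the relevant floor to be at least $1$, so $(-D(l)_m\cdot E(l)_n)>0$; and the case $l=1$ is immediate, since then $(-D(1)_m\cdot E_1)=m>0$.

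For part 2), the one computation actually needed proceeds as follows. On each $X_c$ the coefficient of $E(c)_k$ in $D(c)_m=\lceil mF(c)\rceil$ equals $\lceil m(2^k-1)/2^{k-1}\rceil=2m-\lfloor m/2^{k-1}\rfloor$. Factor $\pi_{b,a}$ into the successive point blow-ups $\pi_{c+1,c}\colon X_{c+1}\to X_c$ for $a\le c<b$; it suffices to prove $\pi_{c+1,c}^*D(c)_m=D(c+1)_m$ whenever $m<2^{c-1}$, for then, since $m<2^{a-1}\le 2^{c-1}$ for every such $c$, these one-step identities chain to give $\pi_{b,a}^*D(a)_m=D(b)_m$. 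The point $p_c$ blown up by $\pi_{c+1,c}$ lies on $E(c)_c$ and on no other exceptional curve, and is a smooth point of $E(c)_c$ and of $X_c$; hence $\pi_{c+1,c}^*E(c)_c=E(c+1)_c+E(c+1)_{c+1}$ while $\pi_{c+1,c}^*E(c)_k=E(c+1)_k$ for $k<c$. Comparing coefficients, $\pi_{c+1,c}^*D(c)_m$ and $D(c+1)_m$ agree on $E(c+1)_k$ for every $k\le c$, and they agree on $E(c+1)_{c+1}$ precisely when $\lfloor m/2^{c-1}\rfloor=\lfloor m/2^c\rfloor$; both sides equal $0$ as soon as $m<2^{c-1}$.

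I expect the pull-back computation in part 2) to be the only step requiring real care: one must be sure that the new exceptional curve enters $\pi_{c+1,c}^*E(c)_c$ with multiplicity exactly one and that the divisors $E(c)_k$ with $k<c$ contribute no exceptional term, since it is this---together with the bound $m<2^{a-1}$ forcing the relevant floors to vanish---that lets the ceiling operation commute with pull-back. Parts 1) and 3) are then just a matter of quoting the case analysis preceding the Proposition.
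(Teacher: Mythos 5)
Your proposal is correct, and for parts 1) and 3) it follows the paper's own route: one simply reads nonnegativity, respectively positivity, of $(-D(l)_m\cdot E(l)_n)$ off the case-by-case computation preceding the Proposition (you additionally dispose of $l=1$ explicitly, which the displayed computations, stated only for $l\ge 2$, leave implicit, and you correctly identify the integral curves in $\pi^{-1}(m_R)$ with the $E(l)_n$). The only place you diverge is the verification of the one-step identity $\pi_{c+1,c}^*D(c)_m=D(c+1)_m$ in part 2). After the same reduction to successive point blowups and induction, the paper argues numerically: by Case 3.1, $(-D(c+1)_m\cdot E_{c+1})=0$ when $m<2^{c-1}$, and since $\pi_{c+1,c}^*D(c)_m-D(c+1)_m$ is a multiple of $E_{c+1}$ (the strict-transform coefficients agree), the projection formula together with $(E_{c+1}^2)=-1\ne 0$ forces that multiple to vanish. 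You instead compute the multiple directly: the coefficient of $E(c+1)_{c+1}$ in the pullback is the multiplicity of $D(c)_m$ at $p_c$, namely $2m-\lfloor m/2^{c-1}\rfloor$, since $p_c$ is a smooth point of $E(c)_c$ and lies on no other exceptional curve, while its coefficient in $D(c+1)_m$ is $2m-\lfloor m/2^{c}\rfloor$, and both floors vanish once $m<2^{c-1}$. The two arguments establish the same one-step statement: yours is more elementary, using only the pullback formula for a blowup at a smooth point of the configuration (with the disjointness from the other $E(c)_k$ correctly justified by the intersection matrix), whereas the paper's recycles the already-completed Case 3.1 computation and avoids any multiplicity bookkeeping. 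In either version the bound $m<2^{a-1}\le 2^{c-1}$ lets the single-step identities chain, completing part 2).
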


\begin{proof} Statement 1) follows from the above calculations. To prove 2), we observe that the calculation of  Case 3.1 shows that $(-D(a+1)_m\cdot E_{a+1})=0$ so that $\pi_{a+1,a}^*(D(a)_m)=D(a+1)_m$.
The conclusions of 2) thus follow from induction on $b$. The conclusions of 3) follow from the cases of the above calculations.
\end{proof}

Since $R$ is a two dimensional regular local ring, $\mbox{Spec}(R)$ has a rational singularity, as commented after the definition Definition 1.1 \cite{L} of rational singularity  and so $H^1(X_l,\mathcal O_{X_l})=0$ by Proposition 1.2 \cite{L}. 
Since $-D(l)_m$ is nef, we have that the sheaf
$\mathcal O_{X_l}(-D(l)_m)$ is generated by global sections for all $l,m\ge 1$ by Theorem 12.1 \cite{L}. Further, all powers of the $m_R$-primary ideal 
$\Gamma(X_l,\mathcal O_{X_l}(-D(l)_m)$ in $R$ are integrally closed since $R$ is a two dimensional regular local ring by Theorem 2, Appendix 5 \cite{ZS2}, Theorem 7.1 \cite{L} or Theorem 3.7 \cite{Hu}. Thus 
$Y_{l,m}=\mbox{Proj}(R[\Gamma(X_l,\mathcal O_{X_l}(-D(l)_m)t])$ is normal and there is a natural morphism $\tau_{l,m}:X_l\rightarrow Y_{l,m}$ for all $m\ge 1$, which contracts the $E(l)_i$ which have intersection number zero with $-D(l)_m$.

There is a natural divisorial valuation $v_{E_n}$ of $R$ whose center on $X_l$ is $E(l)_n$ and whose center on $R$ is $m_R$ for $n\le l$. The valuation ring $\mathcal O_{v_{E_n}}$ is the local ring $\mathcal O_{X_l,E(l)_n}$ for all $l\ge n$. Further, $E(l)_n$ is not contracted on $Y_{l,m}$ for $n\le l$ and $m>2^{n}$ by 3) of Proposition \ref{Prop1}. Since $Y_{l,m}$ is normal, 
\begin{equation}\label{eq9}
\mbox{the center $\tau_{l,m}(E(l)_n)$ of $v_{E_n}$ on $Y_{l,m}$ is a prime divisor}
\end{equation}
 and the local ring $\mathcal O_{Y_{l,m},\tau_{l,m}(E(l)_n)}$ is the valuation ring $\mathcal O_{v_{E_n}}$.

\subsection{Construction of the example}\label{SubSecCon}
Define a graded filtration of $m_R$-primary ideals $\mathcal I=\{I_m\}$ by
$$
I_m=\{f\in R\mid v_{E_i}(f)\ge \frac{m(2^i-1)}{2^{i-1}}\mbox{ for }i\ge 1\}.
$$
Let 
$$
J_{l,m}=\{f\in R\mid v_{E_i}(f)\ge \frac{m(2^i-1)}{2^{i-1}}\mbox{ for }1\le i\le l\}
$$
We have that
$$
J_{l,m}=\Gamma(X_l,\mathcal O_{X_l}(-D(l)_m)).
$$
 By 2) of Proposition \ref{Prop1}, for $l'>l$,
$J_{l',m}=J_{l,m}$ if $m<2^{l-1}$. Thus
\begin{equation}\label{eq8}
I_m=\cap_{a\ge 1}J_{a,m}=J_{l,m}
\end{equation}
for all $l$ such that  $m<2^{l-1}$.

We have that
$$
\begin{array}{lll}
v_{E_n}(I_m)&=&v_{E_n}(\Gamma(X_l,\mathcal O_{X_l}(-D(l)_m))\mbox{ for $m<2^{l-1}$ and $n\le l$}\\
&=& v_{E_n}(\Gamma(X_l,\mathcal O_{X_l}(-D(l)_m))\mathcal O_{v_{E_n}})=\lceil \frac{m(2^n-1)}{2^{n-1}}\rceil,
\end{array}
$$
since $\mathcal O_{X_i,E(l)_n}=\mathcal O_{v_{E_n}}$ and $\mathcal O_{X_l}(-D(l)_m)$ is generated by global sections.
Thus
\begin{equation}\label{eq5}
v_{E_n}(I_m)=\lceil \frac{m(2^n-1)}{2^{n-1}}\rceil.
\end{equation}
for all $m$ and $n$.

Let $n>1$ and choose $q_n\in E_n(n+1)\setminus (E_{n-1}(n+1)\cup E_{n+1}(n+1))$. Then $\pi_{l,n+1}:X_l\rightarrow X_{n+1}$ is an isomorphism above a neighborhood of $q_n$ for all $l>n+1$, so we may identify $q_n$ with the preimage of $q_n$ in $X_l$. Thus $\mathcal O_{X_n,q_n}=\mathcal O_{X_{n+1},q_n}$ for all $l>n+1$.
Let $v$ be the valuation which is the composite of $v_{E_n}$ and the $m$-adic valuation of $q_n$ on $E_n$. 
We construct this valuation explicitly. A general treatment of composite valuations is given in Section 10 of \cite{Ab} and Chapter VI, Section 10 \cite{ZS2}, but knowledge of the general theory is not necessary to understand the explicit construction given here. 
Suppose that $x,y$ are regular parameters in $\mathcal O_{X_{n+1},q_n}$, where $x=0$ is a local equation of $E_n$.  Let $0\ne f\in \mathcal O_{X_{n+1},q_n}$. Write $f=x^ag$ where $g\in\mathcal O_{X_{n+1},q_n}$ and $x\not{|}  g$. Let $\overline{g}$ be the residue of $g$ in the one dimensional regular local ring $\mathcal O_{E_n,q_n}$. Let $b=\mbox{ord}(\overline g)$. 
Then $v(f)=(a,b)\in (\ZZ^2)_{\rm lex}$.

Now $I_m\mathcal O_{X_l,q_n}=J_{l,m}\mathcal O_{X_l,q_n}$ for $l>\max\{n+1,{\rm log}_2m+1\}$ by (\ref{eq8}).
Thus
\begin{equation}\label{eq6}
v(I_m)=(\lceil \frac{m(2^n-1)}{2^{n-1}}\rceil,0)
\end{equation}
since $\mathcal O_{X_l}(-D_m)$ is generated by global sections.
With the notation of Section \ref{SecInv},
\begin{equation}\label{eq7}
\gamma_v(\mathcal I)=\inf\{\frac{v(I_m)}{m}\}=
(\frac{2^n-1}{2^{n-1}},0).
\end{equation}

\begin{Proposition}\label{Prop2}
For all $n\ge 1$, $v$ and $v_{E_n}$ have a center on $\mbox{Proj}(R[\mathcal I])$.
\end{Proposition}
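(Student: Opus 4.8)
The plan is to deduce this directly from Proposition \ref{Prop6}, which asserts that a valuation of $K$ that is nonnegative on $R$ has a center on $Z(\mathcal I)=\mbox{Proj}(R[\mathcal I])$ exactly when the infimum defining $\gamma(\mathcal I)$ is attained at some positive integer. First I would check the hypotheses: $v_{E_n}$ dominates $m_R$, hence is nonnegative on $R$, and $v$ is a valuation of $K$ refining $v_{E_n}$, so for $0\ne f\in R$ one has $v(f)=(v_{E_n}(f),b)$ with $v_{E_n}(f)\ge 0$ and $b\ge 0$; thus $v$ is also nonnegative on $R$ and Proposition \ref{Prop6} applies to both.

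For $v_{E_n}$ (any $n\ge 1$): by (\ref{eq5}), $v_{E_n}(I_m)=\lceil m(2^n-1)/2^{n-1}\rceil$, so $v_{E_n}(I_m)/m\ge (2^n-1)/2^{n-1}$ for every $m$, and therefore $\gamma_{v_{E_n}}(\mathcal I)=(2^n-1)/2^{n-1}$. Since $2^n-1$ is odd and coprime to $2^{n-1}$, the equality $\lceil m(2^n-1)/2^{n-1}\rceil = m(2^n-1)/2^{n-1}$ holds precisely when $2^{n-1}\mid m$; in particular $m=2^{n-1}$ gives $v_{E_n}(I_{2^{n-1}})=2^n-1$, so $v_{E_n}(I_{2^{n-1}})/2^{n-1}=\gamma_{v_{E_n}}(\mathcal I)$. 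By Proposition \ref{Prop6}, $v_{E_n}$ has a center on $Z(\mathcal I)$.

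For the composite valuation $v$ attached to a fixed $n>1$: by (\ref{eq6}), $v(I_m)=(\lceil m(2^n-1)/2^{n-1}\rceil,0)$, and by (\ref{eq7}), $\gamma_v(\mathcal I)=((2^n-1)/2^{n-1},0)$. Taking again $m=2^{n-1}$ yields $v(I_{2^{n-1}})=(2^n-1,0)$, hence $v(I_{2^{n-1}})/2^{n-1}=((2^n-1)/2^{n-1},0)=\gamma_v(\mathcal I)$, and Proposition \ref{Prop6} gives a center on $Z(\mathcal I)$ once more.

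The argument is essentially bookkeeping on top of the formulas (\ref{eq5})--(\ref{eq7}); the one point requiring a little care is the elementary arithmetic pinning down $m=2^{n-1}$ as a value realizing the infimum (which uses $\gcd(2^{n-1},2^n-1)=1$), together with noting that Proposition \ref{Prop6} is being applied legitimately to the rank-two valuation $v$, with the quotient $v(I_m)/m$ read in $(\ZZ^2)_{\rm lex}\otimes\QQ$. I do not expect any genuine obstacle.
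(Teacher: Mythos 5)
Your proof is correct and takes essentially the same route as the paper: realize the infimum $\gamma_v(\mathcal I)$ at an $m$ divisible by $2^{n-1}$ using (\ref{eq5})--(\ref{eq7}) and invoke Proposition \ref{Prop6} (which, as you note, is legitimately applied to the rank-two valuation $v$ with values read in the divisible hull of $(\ZZ^2)_{\rm lex}$). The only difference is cosmetic: the paper handles $v_{E_n}$ by observing that $\mathcal O_{v_{E_n}}$ is a localization of $\mathcal O_v$, so it suffices to produce a center for $v$, whereas you check the criterion for $v_{E_n}$ directly from (\ref{eq5}); both arguments are fine.
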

\begin{proof} Since the valuation ring $\mathcal O_{v_{E_n}}$ is a localization of the valuation ring $\mathcal O_v$, it suffices to show that $v$ has a center on $\mbox{Proj}(R[\mathcal I])$.
If $2^{n-1}$ divides $m$, then we see from (\ref{eq6}) and (\ref{eq7}) that
$$
\frac{v(I_m)}{m}=(\frac{2^n-1}{2^{n-1}},0)=\gamma_{v}(\mathcal I).
$$
Thus $v$ has a center on $\mbox{Proj}(R[\mathcal I])$ by Proposition \ref{Prop6}.

\end{proof}

Let $Z=\mbox{Proj}(R[\mathcal I])$ with natural projection $\lambda:Z\rightarrow \mbox{Spec}(R)$.

Let $\alpha,\beta$ be the respective centers of $v$ and $v_{E_n}$ on $Z$. Let $P_n\subset Q_n$ be the corresponding homogeneous prime ideals in $R[\mathcal I]=\sum_{i\ge 0}I_it^i$. 
%There exists  $F\in I_e$ for some $e>0$ such that $Ft^e\not\in Q_n$. Then $\mbox{Spec}(R[\mathcal I]_{(Ft^e)})$ is an affine neigborhood of $\alpha$ in $\mbox{Proj}(R[\mathcal I])$.
Let $\mathfrak p\subset \mathfrak q$ be the respective prime ideals of $\beta$ and $\alpha$ in $\mathcal O_{Z,\alpha}=R[\mathcal I]_{(Q_n)}$. The valuation ring $\mathcal O_v$ dominates $\mathcal O_{Z,\alpha}$. The prime ideals of $\mathcal O_v$ are $0\subset \mathfrak n\subset \mathfrak m$ where $\mathfrak m$ is the maximal ideal of $\mathcal O_v$ and the localization $(\mathcal O_v)_{\mathfrak n}=\mathcal O_{v_{E_n}}$. We have that $\mathfrak m\cap \mathcal O_{Z,\alpha}=\mathfrak q$ and $\mathfrak n\cap \mathcal O_{Z,\alpha}=\mathfrak p$.

%By  Proposition \ref{Prop1} (as explained just before Subsection \ref{SubSecCon} and in the beginning of Subsection \ref{SubSecCon}) 

For $2^n<m<2^{l-1}$, $I_m=\Gamma(X_l,\mathcal O_{X_l}(-D_m))$ by (\ref{eq8}) and  the centers of $v$ and $v_{E_n}$ on $\mbox{Proj}(R[I_mt^m])=Y_{l,m}$ are respectively a closed point $\gamma$ and a curve $\delta$ by 3) of Proposition \ref{Prop1}. 

The respective corresponding homogeneous prime ideals in $R[I_mt^m]$ are $p_n=P_n\cap R[I_mt^m]$ and $q_n=Q_n\cap R[I_mt^m]$.
The image $\lambda(\beta)=m_R$ since the center of $v_{E_n}$ on $R$ is $m_R$, so $P_n\cap R=m_R$ and $m_RR[\mathcal I]\subset P_n$.
Since $p_n\subset q_n$ are distinct prime ideals we have that 
$$
P_n\subset Q_n\subset m_RR[\mathcal I]+\sum_{i>0}I_it^i
$$ 
are distinct prime ideals. Thus $\dim R[\mathcal I]/P_n\ge 2$. 
%Since  $m_RR[\mathcal I]\subset P_n$, as the center of $v_{E_n}$ on $R$ is $m_R$,
 We have that $\dim R[\mathcal I]/P_n\le \dim R[\mathcal I]/m_RR[\mathcal I]=\ell(\mathcal I)$, the analytic spread of $\mathcal I$. We  have that $\ell(\mathcal I)\le \dim R=2$ by Lemma 3.6 \cite{CS}. Thus $\dim R[\mathcal I]/P_n=2$ for all $n$.

By (\ref{eq9}), for $a<b$, $v_{E_a}$ and $v_{E_b}$ have distinct centers on $Y_{l,m}$ if $2^b<m<2^{l-1}$. Thus the prime ideals $p_a=P_a\cap R[I_mt^m]$ and $p_b=P_b\cap R[I_mt^m]$ are distinct. Thus the prime ideals $P_a$ and $P_b$ are distinct. Consequently, 
the one dimensional scheme $\mbox{Proj}(R[\mathcal I]/m_RR[\mathcal I])$ has infinitely many distinct irreducible components of dimension one. %We have established the following theorem.

Finally, we show that $Z$ is not Noetherian. 
 Suppose that $Z=\mbox{Proj}(R[\mathcal I])$ is Noetherian. Then the underlying topological space of $Z$ is Noetherian. This implies that every nonempty closed subset $Y$ of $Z$ can be expressed as a finite union of irreducible closed subsets of $Z$ by \cite[Proposition I.1.5]{H}. But the one dimensional subscheme $Y=\mbox{Proj}(R[\mathcal I]/m_RR[\mathcal I])$ has infinitely many one dimensional irreducible components. This contradiction shows that $Z$ is not noetherian.

Thus we have completed the proof of the following theorem.

\begin{Theorem}\label{Theorem3} There exists a graded filtration $\mathcal I$ of $m_R$-primary ideals on a two dimensional regular local ring $R$ such that 
the ideal  $m_RR[\mathcal I]$ has infinitely many distinct minimal primes, and the one dimensional scheme $\mbox{Proj}(R[\mathcal I]/m_RR[\mathcal I])$ has infinitely many distinct one dimensional irreducible components.
In particular, the two dimensional scheme  $\mbox{Proj}(R[\mathcal I])$ is not  Noetherian.
 \end{Theorem}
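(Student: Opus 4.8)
The plan is to realize the example as an infinite iterated point blowup of a two-dimensional regular local ring, carrying along a sequence of $\QQ$-divisors designed so that their rounded multiples $D(l)_m=\lceil mF(l)\rceil$ have anti-nef, globally generated twists for \emph{all} $m$, yet never permanently collapse the exceptional curves $E(l)_n$. First I would fix the combinatorial skeleton: the dual graph of $X_l\to\mathrm{Spec}(R)$ is the chain $E_1-E_2-\cdots-E_l$ with self-intersections $-2,\ldots,-2,-1$, and I would record the full intersection matrix displayed above. Everything downstream is governed by how the ceilings in $D(l)_m=\lceil mF(l)\rceil$ interact with this $(-2)$-chain.

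Next I would run the residue-modulo-$2^n$ case analysis for $(-D(l)_m\cdot E(l)_n)$ that is already laid out, extracting the three facts packaged as Proposition \ref{Prop1}: $(1)$ $-D(l)_m$ is nef; $(2)$ $\pi_{b,a}^*(D(a)_m)=D(b)_m$ whenever $m<2^{a-1}$; $(3)$ $(-D(l)_m\cdot E(l)_n)>0$ whenever $m>2^n$. Nefness plus the two-dimensional regular local hypothesis gives, by Lipman (Theorem 12.1 of \cite{L}), that $\mathcal O_{X_l}(-D(l)_m)$ is generated by global sections, so $J_{l,m}=\Gamma(X_l,\mathcal O_{X_l}(-D(l)_m))$ is an integrally closed $m_R$-primary ideal, $Y_{l,m}=\mathrm{Proj}(R[J_{l,m}t])$ is normal, and by $(3)$ the divisorial valuation $v_{E_n}$, whose center on $X_l$ is $E(l)_n$, has a \emph{prime divisor} as center on $Y_{l,m}$ as soon as $m>2^n$.

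Then I would define $\mathcal I=\{I_m\}$ by the valuative conditions $v_{E_i}(f)\ge m(2^i-1)/2^{i-1}$ for all $i\ge1$, use $(2)$ to see that $I_m=J_{l,m}$ for any $l$ with $m<2^{l-1}$ (so each $I_m$ is computed on a finite model), and compute $v_{E_n}(I_m)=\lceil m(2^n-1)/2^{n-1}\rceil$. To force centers onto $Z=\mathrm{Proj}(R[\mathcal I])$, for each $n$ I would introduce the rank-two composite $v$ of $v_{E_n}$ with the order valuation at a general point $q_n\in E_n$, compute $v(I_m)=(\lceil m(2^n-1)/2^{n-1}\rceil,0)$ and hence $\gamma_v(\mathcal I)=((2^n-1)/2^{n-1},0)$, observe that the equality $v(I_m)/m=\gamma_v(\mathcal I)$ is attained (for instance when $2^{n-1}\mid m$), and invoke Proposition \ref{Prop6} to conclude that both $v$ and $v_{E_n}$ have centers on $Z$ (Proposition \ref{Prop2}).

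Finally I would read off the topology. The centers of $v_{E_n}$ and of $v$ are distinct homogeneous primes $P_n\subsetneq Q_n$ of $R[\mathcal I]$ with $m_RR[\mathcal I]\subseteq P_n$, so $\dim R[\mathcal I]/P_n\ge2$; since the analytic spread is at most $\dim R=2$ (Lemma 3.6 of \cite{CS}), $\dim R[\mathcal I]/P_n=2$, whence each $P_n$ is a two-dimensional minimal prime over $m_RR[\mathcal I]$ and contributes a one-dimensional component of the fiber cone's $\mathrm{Proj}$. The crux is distinctness of the $P_n$: restricting to $R[I_mt^m]$ with $m$ large identifies $\mathrm{Proj}(R[I_mt^m])$ with $Y_{l,m}$, where by \eqref{eq9} the divisors $v_{E_a},v_{E_b}$ have distinct centers for $a\ne b$; hence $P_a\cap R[I_mt^m]\ne P_b\cap R[I_mt^m]$, so $P_a\ne P_b$. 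Thus $m_RR[\mathcal I]$ has infinitely many minimal primes and $\mathrm{Proj}(R[\mathcal I]/m_RR[\mathcal I])$ has infinitely many one-dimensional components, so if $Z$ were Noetherian its underlying space would be Noetherian, contradicting that a closed subset has infinitely many irreducible components (\cite[Proposition I.1.5]{H}). The main obstacle is the bookkeeping in Proposition \ref{Prop1}: choosing the coefficients $2^{i-1}$ so that the ceilings keep $-D(l)_m$ \emph{exactly} nef without ever collapsing an $E_n$ for good, and then faithfully transporting the distinctness of centers from the finite models $Y_{l,m}$ back to $R[\mathcal I]$.
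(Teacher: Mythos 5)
Your proposal is correct and follows essentially the same route as the paper: the same iterated blowups $X_l$ with divisors $D(l)_m=\lceil mF(l)\rceil$, the same intersection-number case analysis giving Proposition \ref{Prop1}, the same use of Lipman's theorem and the models $Y_{l,m}$, the same composite valuations $v$ together with Proposition \ref{Prop6} to place centers on $\mbox{Proj}(R[\mathcal I])$, and the same dimension/analytic-spread and distinctness-of-centers argument to produce infinitely many minimal primes of $m_RR[\mathcal I]$ and conclude non-Noetherianity via \cite[Proposition I.1.5]{H}. No substantive differences from the paper's proof.
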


\end{document}